\newtheorem{theorem}{\bf Theorem}[section]
\newtheorem{lemma}[theorem]{\bf Lemma}
\newtheorem{corollary}[theorem]{\bf Corollary}
\newtheorem{definition}[theorem]{\bf Definition}
\newtheorem{remark}[theorem]{\bf Remark}
\newtheorem{proposition}[theorem]{\bf Proposition}
\newenvironment{proof}{\noindent {\sc Proof.}}{\hfill $\square$}
\newenvironment{marev}{\color{red}}{\color{black}}
\newcommand{\bmicr}{\begin{marev}}
\newcommand{\emicr}{\end{marev}}
\newenvironment{sergiorev}{\color{blue}}{\color{black}}
\newcommand{\bsr}{\begin{sergiorev}}
\newcommand{\esr}{\end{sergiorev}}
\definecolor{ao(english)}{rgb}{0.0, 0.5, 0.0}
\newenvironment{frarev}{\color{ao(english)}}{\color{black}}
\newcommand{\bfra}{\begin{frarev}}
\newcommand{\efra}{\end{frarev}}
\newcommand{\denoterow}[1]{\rlap{\hspace{1em}$\leftarrow{}$ $j^{th}$}}
\newcommand{\R}{\mathbb{R}}
\newcommand{\N}{\mathbb{N}}
\def \Z {{\mathbb{Z}}}
\def \K  {K}  
\def \L {\mathscr{L}}
\def \K {\mathcal{K}}
\def \Q {\mathcal{Q}}
\def \phi {\varphi}
\def \div {{\text{\rm div}} \hspace{.5mm}}
\def \loc {{\text{\rm loc}}}
\def \p{\partial}
\def \diag {{\text{\rm diag}}}
\def \trace {{\text{\rm tr}}}
\def \det {{\text{\rm det }}}
\def \meas {{\text{\rm meas}}}
\def \D {{\Delta}}
\def \a {{\alpha}}
\def \b {{\beta}}
\def \cc {{\chi}}
\def \g {{\gamma}}
\def \d {{\delta}}
\def \e {{\varepsilon}}
\def \epsilon {{\varepsilon}}
\def \k {{\kappa}}
\def \l {{\lambda}}
\def \r {{\rho}}
\def \t {{\tau}}
\def \m {{\mu}}
\def \x {{\xi}}
\def \z {{\zeta}}
\def \G {{\Gamma}}
\def \O {{\Omega}}
\def \OO {{\mathbb{O}}}
\def \I {{\mathbb{I}}}
\begin{document}
\title{Moser's estimates for degenerate Kolmogorov equations with non-negative divergence lower order coefficients}
\date{}
\author{
\sc{Francesca Anceschi}
\thanks{Dipartimento di Scienze Fisiche, Informatiche e Matematiche, 
Universit\`{a} di Modena e Reggio Emilia, Via Campi 213/b, 41125 Modena (Italy).
E-mail: francesca.anceschi@unimore.it} \hspace{8mm}
\sc{Sergio Polidoro}
\thanks{Dipartimento di Scienze Fisiche, Informatiche e Matematiche, 
Universit\`{a} di Modena e Reggio Emilia, Via
Campi 213/b, 41125 Modena (Ialy). E-mail: sergio.polidoro@unimore.it} \\
{\sc{Maria Alessandra Ragusa} 
\thanks{Dipartimento di Matematica e Informatica, Università degli Studi di  
Catania, Viale Andrea Doria, 5, 95125 Catania (Italy), RUDN  
University, 6 Miklukho - Maklay St, Moscow, 117198 (Russia). E-mail: 
maragusa@dmi.unict.it} }}

\maketitle

\bigskip

\begin{abstract}
We prove $L^\infty_\loc$ estimates for positive solutions to the following degenerate second order partial differential 
equation of Kolmogorov type with measurable coefficients of the form
\begin{align*}
	\sum \limits_{i,j=1}^{m_0}  \p_{x_i} \left( a_{ij}(x,t) \p_{x_j} u(x,t) \right) 
     &+ \sum \limits_{i,j=1}^{N} b_{ij} x_j \p_{x_i} u(x,t) - 
    \p_t u(x,t)  + \\
    &+ \sum \limits_{i=1}^{m_0} b_i(x,t) \p_i u(x,t)  - 
     \sum \limits_{i=1}^{m_0} \p_{x_i} \left( a_i(x,t) u(x,t) \right) + c(x,t) u(x,t) = 0
\end{align*}
where $(x,t)= ( x_1, \ldots, x_N, t) = z$ is a point of 
$\R^{N+1}$, and $1 \le m_0 \le N$. $(a_{ij})$ is an uniformly positive symmetric matrix with bounded measurable 
coefficients, $(b_{ij})$ is a constant matrix. We apply the Moser's iteration method to prove the local boundedness of 
the solution $u$ under minimal integrability assumption on the coefficients.
\end{abstract} 

\normalsize\

\section{Introduction}
We consider second order partial differential operators of Kolmogorov-Fokker-Planck 
type of the form
\begin{equation} \label{eq1}
\begin{split}
      \L u(x,t) := & 
      \sum \limits_{i,j=1}^{m_0} \p_{x_i} \left( a_{ij}(x,t) \p_{x_j} u(x,t) \right) 
      + \sum \limits_{i,j=1}^{N} b_{ij} x_j \p_{x_i} u(x,t) - \p_t u(x,t)  + \\
     + & \sum \limits_{i=1}^{m_0} b_i(x,t) \p_i u(x,t) -
     \sum \limits_{i=1}^{m_0} \p_{x_j} \left( a_i(x,t) u(x,t) \right) + c(x,t) u(x,t) = 0,
\end{split}
\end{equation}
in some open set $\Omega \subseteq \R^{N+1}$. Here $z = (x,t) = ( x_1, \ldots, x_N, t)$ denotes a point of $\R^{N+1}$, 
and $1 \le m_0 \le N$. In the sequel we will use the following notation
\begin{equation*}
    A(x,t) = \left( a_{ij}(x,t) \right)_{1 \le i,j \le N}, 
 \end{equation*}
where $ a_{ij}$ is the coefficient appearing in \eqref{eq1} for $i, j=1, \dots, m_0$, while $a_{ij} \equiv 
0$ whenever $i > m_0$ or $j > m_0$. Eventually,
\begin{align}
    \label{eq3}
      a(x,t) = \left(a_1(x,t), \ldots, a_{m_0}(x,t), 0, \ldots, 0 \right),& \qquad 
      b(x,t) = \left(b_1(x,t), \ldots, b_{m_0}(x,t), 0, \ldots, 0 \right)    
       \nonumber \\
     Y = \sum \limits_{i,j=1}^{N} &b_{ij} x_j \p_{x_i} - \p_t.
\end{align}
Then the operator $\L$ takes the following compact form
\begin{equation*}
    \L u = \div (A D u) + Y u + \langle b, D u \rangle - \div ( a u ) + c u. 
\end{equation*}
Here and in the sequel
\begin{equation} \label{notation1}
    D = (\p_{x_1}, \ldots, \p_{x_N}), \qquad \langle \cdot, \cdot \rangle, \qquad \div ,
\end{equation}
denote the gradient, the inner product, and and the divergence in $\R^N$, respectively. 
As the operator $\L$ is non degenerate with respect to the first $m_0$ components of $x$, we also introduce the 
notation
\begin{equation*} \label{notation2}
    D_{m_{0}} = (\p_{x_1}, \ldots, \p_{x_{m_{0}}}).
\end{equation*} 

We assume the following structural condition on $\L$.
\begin{description}
  \item [(H1)] The matrix $\left(a_{ij}(x,t) \right)_{i,j=1, \dots, m_0}$ is symmetric with real measurable entries. 
Moreover, $a_{ij}(x,t) = a_{ji}(x,t), \hspace{1mm} 1 \le i,j \le m_0$, and there exists a positive constant $\lambda$ 
such that 
\begin{equation*}
             \l^{-1} | \xi |^2 \le \sum \limits_{i,j=1}^{m_0} a_{ij}(x,t) \xi_i \xi_j \le \l | \xi |^2,
\end{equation*}
for every $(x,t) \in \R^{N+1}$ and $\xi \in \R^{m_0}$. The matrix $B= \left( b_{ij} \right)_{i,j=1, \ldots, N}$ is 
constant.
\end{description}

Note that the operator $\L$ is uniformly parabolic when $m_0 = N$. In this note, we are mainly interested in the 
case $m_0 < N$, that is the strongly degenerate one. It is known that the first order part of $\L$ may provide 
it with strong regularity properties. To be more specific, let's consider the operator $\K$ defined as follows:
\begin{equation} \label{eq2}
      \K u(x,t) :=  \sum \limits_{i=1}^{m_0} \p^2_{x_i} u(x,t) 
      + \sum \limits_{i,j=1}^{N} b_{ij} x_j \p_{x_i} u(x,t) - \p_t u(x,t).
\end{equation}
It is known that, if the matrix $B$ satisfies a suitable  assumption, then $\K$ is hypoelliptic. This means that, if 
$u$ is a distributional solution to $\K u = f$ in some open set $\Omega$ of $\R^{N+1}$ and $f \in C^\infty(\Omega)$, 
then $u \in C^\infty(\Omega)$ and it is a classic solution to the equation.

The hypoellipticity of $\K$ can be tested via the condition introduced by H\"ormander in \cite{H}:
\begin{equation*}
    \text{rank} \hspace{1mm} \text{Lie} ( \p_{x_1}, \ldots, \p_{x_{m_0}}, Y ) (x,t) = N+1, 
    \hspace{2mm} \forall (x,t)  \in \R^{N+1},
\end{equation*}
where $\text{Lie} ( \p_{x_1}, \ldots, \p_{x_{m_0}}, Y ) (x,t) $ denotes the Lie algebra 
generated by the first order differential operators (vector fields) $( \p_{x_1}, \ldots, \p_{x_{m_0}}, Y )$, computed 
at $(x,t)$. We refer to E. Lanconelli and one of  the  authors \cite{LP} for a characterization of the hypoellipticity 
of $\K$ in terms of the matrix $B$.

\begin{description}
  \item  [(H2)] The principal part $\K$ of $\L$ is hypoelliptic.
\end{description}

In Section \ref{preliminaries}, we recall a known structural condition on the matrix $B$ equivalent to \textbf{(H2)}. 
We remark that if $\L$ is an uniformly parabolic operator (i.e. $m_0=N$ and $B \equiv 0$), then \textbf{(H2)} is 
clearly satisfied. Indeed, the principal part of $\L$ simply is the heat operator, which is hypoelliptic and 
homogeneous 
with respect to the parabolic dilations $\d_{\l}(x,t) = (\l x, \l^2 t)$.  In the degenerate setting, $\K$ plays the 
same role that the heat operator plays in the family of the parabolic operators. For this reason, $\K$ will be 
referred to as \emph{principal part of} $\L$.

The aim of this work is to prove $L^\infty_{\loc}$ estimates for weak solutions to $\L u = 0$, by using the Moser's 
iteration method, under minimal assumptions on the integrability of the lower order coefficients $a_1, \ldots, a_{m_0}, b_1, \dots, 
b_{m_0}, c$. The Moser's iterative scheme (\cite{M1}, \cite{M2}) has been applied to degenerate parabolic operators $\L$ with no 
lower order terms by Cinti, Pascucci and one of the authors in \cite{PP} and \cite{CPP}. These results have been 
extended to operators with bounded first order coefficients by Lanconelli, Pascucci and one of the authors in 
\cite{LP} and \cite{LPP}, and to operators with first order coefficients belonging to some $L^{q}$ space by Wang and 
Zhang \cite{WZ4}. 

Our study has been inspired by the article of Nazarov and Uralt'seva \cite{NU}, who prove 
$L^\infty_{\loc}$ estimates and Harnack inequalities for uniformly elliptic and parabolic 
operators in divergence form that are those with $m_0 = N$ according to our notation.
The authors consider uniformly parabolic equations in $\R^{N+1}$
\begin{equation*}
    \L u =  \div (A Du) + \langle b, Du \rangle - \p_t u = 0,
\end{equation*}
with $b_1, \dots, b_N \in L^q (\R^{N+1})$. They prove that the Moser's iteration can be accomplished provided that 
$\frac{N+2}{2} < q \le N+2$ relying on the condition $\div  b \ge 0$ to relax the integrability assumption on $b_1, \dots, b_{m_0}$.
Here and in the sequel, the quantity $\div b$ will be understood in the distributional sense
\begin{equation*} 
  \int_{\Omega} \varphi (x,t) \div  b(x,t) dx \, dt = - 
  \int_{\Omega}  \langle b(x,t), \nabla \varphi (x,t) \rangle  dx \, dt,
\end{equation*}
for every $\varphi \in C_0^\infty(\Omega)$. Of course, also the quantity $\div a$ will be understood in the distributional sense. 
 
When considering degenerate operators, a suitable dilation group $\left( \d_r \right)_{r > 0}$ in $\R^{N+1}$ replaces 
the usual parabolic dilation $\d_{r}(x,t) = (r x, r^2 t)$, and the \emph{parabolic dimension} $N+2$ of $\R^{N+1}$ is 
replaced by a bigger integer $Q+2$, which is called \emph{homogeneous dimension}  of $\R^{N+1}$ with respect to $\left( 
\d_r \right)_{r > 0}$. Our main result will be declared in terms of this quantity, that will be introduced in Section 
\ref{preliminaries}. 

As far as it concerns degenerate operators, Wang and Zhang obtain in \cite{WZ4} the local boundedness and the H\"older continuity 
for weak solutions to $\L u = 0$ by assuming the condition $b_1, \dots, b_{m_0} \in L^q(\R^{N+1})$, with $q= Q+2$.
Our assumption on the integrability of the lower order coefficients $a_i, b_i$, with $i=1, \ldots, m_0$ and $c$ is stated as follows:
	\begin{description}
    		\item  [(H3)] $a_i, b_i,c \in L^q_\loc \left( \Omega \right)$, with $i = 1, \ldots, m_0$, for some $q > \frac34(Q+2)$. 
		Moreover, 
		\begin{equation*}
			\div a, \div b \ge 0 \qquad {\rm in } \, \, \Omega.
		\end{equation*}
	\end{description}

In general, solutions to $\L u = 0$ will be understood in the following weak sense.

\begin{definition} 
Let $\O$ be an open subset  of $\R^{N+1}$.
A weak solution to $\L u=0$ is a function $u$ such that $u,D_{m_0} u, Yu \in 
L^2_{\loc}(\O)$ and
    \begin{equation}
        \label{weak-sol2}
        \int \limits_{\O} - \langle A Du, D \phi \rangle + \phi Yu + \langle
        b, D u \rangle \phi + \langle a, D \phi \rangle u + cu \phi
        = 0, \hspace{4mm} \forall \phi \in C^{\infty}_0 ( \O ).
    \end{equation}
    In the sequel, we will also consider weak sub-solutions to $\L u=0$, 
    namely functions $u$ such that $u, D_{m_0}u, Yu \in L^2_{\loc}(\O)$ and
    \begin{equation}
        \label{subsol}
        \int_{\O} - \langle A Du, D \phi \rangle + \phi Y u + \langle b, Du 
        \rangle \phi + \langle a, D \phi \rangle u + cu \phi \ge 0, \hspace{4mm} \forall \phi \in C^{\infty}_0 (\O), 
        \phi \ge 0.
    \end{equation}
    A function $u$ is a super-solution of $\L u = 0$ 
    if $-u$ is a 
    sub-solution.
\end{definition}
We note that if $u$ is both a sub-solution and a super-solution of $\L u=0$ then it is a solution, 
i.e. $\L u = 0$ holds. Indeed, for every given $\phi \in 
C^{\infty}_0(\O)$, we may consider $\psi \in C^{\infty}_0 (\O)$ such that $\psi \ge 0$ 
and $\psi - \phi \ge 0$ in $\O$. Therefore $\L u = 0$ follows by applying 
\eqref{subsol}  to $\pm u$.

\medskip

A comparison of our result with that of Nazarov and Uralt'seva is in order. It would be natural to expect that the 
optimal lower bound for the exponent $q$ is $\frac{Q+2}{2}$. Indeed, the difficulty in considering degenerate 
equations lies in the fact that a Caccioppoli inequality gives an \emph{a priori} $L^2$ estimate for the derivatives 
$\partial_{x_1}u, \dots, \partial_{x_{m_0}}u$ of the solution $u$, that are the derivative with respect to the 
\emph{non-degeneracy} directions of $\L$. Moreover, the standard Sobolev inequality cannot be used to obtain an improvement of the 
integrability of the solution as in the non-degenerate case. For this reason we rely on a representation formula for 
the solution $u$ first used in \cite{PP}. Specifically, we represent a solution $u$ to $\L u=0$ in terms of the 
fundamental solution of $\K$. Indeed, if $u$ is a solution to $\L u=0$ in $\Omega$, then we have 
\begin{equation} \label{eq4}
	u(x,t) = \int_{\Omega} \Gamma (x,t,\xi,\tau) \K u(\xi,\tau) d\xi \, d\tau,
\end{equation}
where $\Gamma$ is the fundamental solution to $\K$ (see \eqref{eq-Gamma0} and \eqref{eq-Gamma0-b} in the sequel), 
and 
\begin{equation} \label{eq5}
	\K u = \left( \K - \L \right) u = \div \left( (A_0 - A) D u \right) - \langle b, D u \rangle + \div (a u) - cu,
\end{equation}
where we denote
 \begin{equation}
    \label{A0}
    A_0 = \begin{pmatrix} 
            \I_{m_0} & \OO\\
            \OO & \OO
           \end{pmatrix},
\end{equation}
where $\I_{m_0}$ is the identity matrix in $\R^{m_0}$, and $\OO$ 
are zero matrices.
This representation  formula provides us with a Sobolev type inequality only for weak solutions to the 
equation $\L u=0$. Specifically, we find that, for every $\Omega_1 \subset \subset \Omega_2 \subset 
\subset \Omega_3 \subset \subset \Omega$, there exist a positive constant $c_1\left( \parallel b 
\parallel_{L^{q}(\Omega)}, \Omega_1, \Omega_2 \right)$ such that 
\begin{equation*}
        \parallel u \parallel_{L^{2 \a}(\Omega_1)} \le c_1\left( \parallel a \parallel_{L^{q}(\Omega)},\parallel b \parallel_{L^{q}(\Omega)},
        \parallel c \parallel_{L^{q}(\Omega)},
        \Omega_1, \Omega_2 \right) \parallel D_{m_0} u \parallel_{L^{2}(\Omega_2)},
\end{equation*}
and, by considering $u$ as a test function, we obtain the following Caccioppoli inequality  
\begin{align*}
       \parallel D_{m_0}u \parallel_{L^2((\Omega_2)} \le  c_2\left(\parallel a \parallel_{L^{q}(\Omega)}, \parallel b \parallel_{L^{q}
       (\Omega)}, \parallel c \parallel_{L^{q}(\Omega)}, \Omega_2, \Omega_3 \right) \parallel u \parallel_{L^{2 \b}(\Omega_3)},
\end{align*}
where 
\begin{equation}
	\label{esponenti}
	\a := \frac{q (Q+2)}{q (Q-2) + 2 (Q+2)}, \qquad \qquad \b := \frac{q}{q-1} .
\end{equation}

As far as it concerns the Moser's iteration, the above inequalities are applied to a sequence of 
functions $u_k := u^{p_k}$, with $p_k \to + \infty$, in order to obtain an $L^\infty_\loc$ bound for 
the solution $u$.

We note that, the Sobolev inequality is useful to the iteration whenever $\a > 1$, and this is true if, and only 
if $q > \frac{Q+2}{2}$. Moreover, the condition $q > \frac{Q+2}{2}$ is required by Nazarov and Uralt'seva in the proof 
of the Caccioppoli inequality for non-degenerate operators. Since in our work both Sobolev and Caccioppoli inequalities 
depend on the $L^q$ norm of $a_{1}, \ldots, a_{m_{0}}, b_1, \dots, b_{m_0}, c$, we require a more restrictive condition on $q$ to improve the 
integrability of $u$. Specifically, if we combine the Sobolev and the Caccioppoli inequalities, we need to have $\a 
> \b$, and this is true if, and only if  $q > \frac{3}{4}(Q+2)$, as we require in Assumption 
$\textbf{(H3)}$.

\medskip

\medskip

We next state our main result. As we shall see in Section \ref{preliminaries}, the natural geometry underlying the 
operator $\L$ is determined by a suitable homogeneous Lie group structure on $\R^{N+1}$. Our main 
result reflect this non-Euclidean background. Let \lq \lq$\circ$" denote the Lie product on $\R^{N+1}$
defined in \eqref{grouplaw} and $\{ \d_{r} \}_{r>0}$ the family of dilations defined in \eqref{gdil}. Let us consider the cylinder:
\begin{equation*}
    \Q_1 := \left\{ (x,t) \in \R^N \times \R \hspace{1mm} : \hspace{1mm} |x| < 1, \hspace{2mm}
    |t| < 1 \right\}.
\end{equation*}
For every $z_0 \in \R^{N+1}$ and $r > 0$, we set
\begin{equation*}
    \label{cylinder}
    \Q_r (z_0) := z_0 \circ (\d_r (\Q_1)) = \left\{ z \in \R^{N+1} \hspace{1mm} : \hspace{1mm}
    z = z_0 \circ \d_r (\z), \z \in \Q_1  \right\}.
\end{equation*}

\begin{theorem}
    \label{iteration}
    Let $u$ be a non-negative weak solution to $\L u=0$ in $\O$. Let $z_0 \in \O$
    and $r, \r, \frac12 \le \r < r \le 1$, be such that $\overline{\Q_r (z_0)} \subseteq \O$. Then there exist positive constants $C= C(p, \l)$ and $\g=\g(p,q)$ such that for every $p \ne 0$, it holds
    \begin{equation}
        \label{est-it}
    	\sup_{\Q_{\r}(z_{0})} u^{p} \, \le \, \frac{C \left( 1 + \parallel a \parallel_{L^q (\Q_r(z_{0}))}^{2} + \parallel b \parallel_{L^q (\Q_r(z_{0}))}^{2} + \parallel c \parallel_{L^q (\Q_r(z_{0}))} 
	\right)^{\g}}{(r-\r)^{9 (Q+2)}} \int_{\Q_{r}(z_{0})} u^{p} ,
    \end{equation}
    where $\g = \frac{2 \a^{2} \b}{\a-1}$, with $\a$ and $\b$ defined as in \eqref{esponenti}.
\end{theorem}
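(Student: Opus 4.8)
The plan is to run Moser's iteration based on the coupled Sobolev and Caccioppoli inequalities announced in the introduction. Fix $z_0$ and radii $\tfrac12 \le \rho < r \le 1$. The first step is to treat the case $p \ge 1$ (and later $p \le -$something; actually all $p \ne 0$) by setting, for a solution $u$ to $\L u = 0$, the power $v = u^{p/2}$ or, more precisely, working with $w_k = u^{p \kappa^k}$ where $\kappa = \alpha/\beta > 1$. I would first record that, for $\phi$ a cutoff, $u^{p}$ (suitably truncated to make everything finite, then letting the truncation go to infinity) is a weak sub-solution of an equation of the same structural type: a direct computation shows that if $\L u = 0$ then $v := u^s$ satisfies, in the weak sense, $\div(A Dv) + Yv + \langle b, Dv\rangle - \div(av) + \tilde c \, v \le 0$ for $s \ge 1$ (using $\div a, \div b \ge 0$ to absorb the bad sign terms and the convexity of $t \mapsto t^s$), while for $s < 0$ one gets a super-solution; this is where Assumption \textbf{(H3)} on the signs is essential. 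The truncation argument (replace $u$ by $\min(u,L)$, derive the inequality, pass to the limit by monotone convergence) should be isolated as a preliminary lemma or cited from \cite{PP,CPP,LPP,WZ4}.

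Next I would make precise the two basic inequalities on nested cylinders. Given $\tfrac12 \le \rho < \rho' < \rho'' < r \le 1$, choosing cutoff functions $\psi$ adapted to the $\circ$-translated, $\delta_r$-dilated cylinders with $|D_{m_0}\psi| \lesssim (\rho'' - \rho')^{-1}$ and $|Y\psi| \lesssim (\rho'' - \rho')^{-2}$, the Sobolev-type inequality coming from the representation formula \eqref{eq4}--\eqref{eq5} gives
\begin{equation*}
	\| \psi v \|_{L^{2\alpha}} \le c_1\big( 1 + \|a\|_{L^q} + \|b\|_{L^q} + \|c\|_{L^q} \big) \big( \| D_{m_0}(\psi v) \|_{L^2} + (\rho''-\rho')^{-2}\|v\|_{L^2} \big),
\end{equation*}
and using $\psi v$ as a test function in the sub-solution inequality for $v$ yields the Caccioppoli bound
\begin{equation*}
	\| D_{m_0}(\psi v) \|_{L^2} \le c_2\big( 1 + \|a\|_{L^q} + \|b\|_{L^q} + \|c\|_{L^q} \big) (\rho''-\rho')^{-2} \| v \|_{L^{2\beta}(\{\psi=1\})}.
\end{equation*}
Combining them, and noting that $\|v\|_{L^2}$ is controlled by $\|v\|_{L^{2\beta}}$ on the larger cylinder (since $\beta > 1$, by Hölder on a bounded set, up to a harmless volume factor bounded because radii lie in $[\tfrac12,1]$), I get
\begin{equation*}
	\| v \|_{L^{2\alpha}(\Q_{\rho'})} \le \frac{C\big(1 + \|a\|_{L^q}^2 + \|b\|_{L^q}^2 + \|c\|_{L^q}\big)}{(\rho''-\rho')^{4}} \, \| v \|_{L^{2\beta}(\Q_{\rho''})},
\end{equation*}
where the squares on $\|a\|,\|b\|$ appear because $c_1 c_2$ multiplies the two factors, while $\|c\|$ enters linearly — this matches the form of the constant in \eqref{est-it}. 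Writing $\kappa := \alpha/\beta$, which is $> 1$ exactly under \textbf{(H3)}, and $v = u^{p\beta\kappa^k}$... more cleanly, apply this with $v = u^{s}$ to convert an $L^{2\beta s}$ bound into an $L^{2\alpha s} = L^{2\beta s \kappa}$ bound.

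Then comes the iteration itself. Set $s_k = \kappa^k$ (up to a fixed factor so that $2\beta s_0 = p$, i.e. $s_0 = p/(2\beta)$), and radii $\rho_k = \rho + (r-\rho)2^{-k}$ decreasing from $r$ to $\rho$, so $\rho_{k} - \rho_{k+1} = (r-\rho)2^{-k-1}$. Abbreviating $M := C(1 + \|a\|_{L^q}^2 + \|b\|_{L^q}^2 + \|c\|_{L^q})$, the one-step estimate reads
\begin{equation*}
	\Big( \int_{\Q_{\rho_{k+1}}} u^{p \kappa^{k+1}} \Big)^{1/(p\kappa^{k+1})} \le \Big( \frac{M \, 4^{k+1}}{(r-\rho)^4} \Big)^{1/(p\kappa^k)} \Big( \int_{\Q_{\rho_{k}}} u^{p\kappa^{k}} \Big)^{1/(p\kappa^{k})}.
\end{equation*}
Iterating and using $\sum_k \kappa^{-k} < \infty$ and $\sum_k k \kappa^{-k} < \infty$, the product of the prefactors converges to $\big( M/(r-\rho)^4 \big)^{\theta} 4^{\theta'}$ with $\theta = \tfrac{1}{p}\sum \kappa^{-k} = \tfrac{1}{p}\cdot\tfrac{\kappa}{\kappa-1}$ and a similar $\theta'$; since $\tfrac{\kappa}{\kappa-1} = \tfrac{\alpha}{\alpha-\beta}$ and, after tracking the exact exponents through the $(\rho''-\rho')^{-4}$ factor and the reindexing, one lands on the stated power $(r-\rho)^{-9(Q+2)}$ and $\gamma = \tfrac{2\alpha^2\beta}{\alpha-1}$. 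Letting $k\to\infty$, $\big(\int_{\Q_{\rho_k}} u^{p\kappa^k}\big)^{1/(p\kappa^k)} \to \sup_{\Q_\rho} u$ by the standard $L^p \to L^\infty$ limit (for $p > 0$), which gives \eqref{est-it} for $p > 0$; for $p < 0$ one runs the same scheme with $v = u^s$, $s < 0$, a super-solution, and the cylinders' roles adjusted, arriving at the $\inf$ version which is \eqref{est-it} with negative $p$.

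The main obstacle I expect is bookkeeping the constants so that the $(r-\rho)$-power comes out as exactly $9(Q+2)$ and the coefficient dependence comes out as exactly $(\,\cdot\,)^\gamma$ with $\gamma = 2\alpha^2\beta/(\alpha-1)$: the Sobolev inequality from the representation formula is not the plain parabolic Sobolev inequality, so one must carefully re-derive how the cutoff derivatives $|D_{m_0}\psi|, |Y\psi|$ enter (there may be an extra factor from the non-homogeneous scaling of $Y$ versus $D_{m_0}$), and then the geometric-series exponents have to be matched. A secondary technical point is justifying the power substitution $u \mapsto u^s$ rigorously for a merely weak solution — ensuring $u^s, D_{m_0}(u^s), Y(u^s) \in L^2_{\loc}$ so that $u^s$ is an admissible sub/super-solution — which requires the truncation lemma and a limiting argument; this is routine in the literature but must be stated. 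Everything else (choice of cutoffs adapted to $\Q_r$, Hölder to pass from $L^2$ to $L^{2\beta}$, summing the series) is standard Moser iteration.
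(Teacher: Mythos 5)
Your overall strategy --- couple the representation-formula Sobolev inequality with the Caccioppoli inequality on nested cylinders, then iterate over powers with geometrically shrinking radii --- is the same as the paper's. The genuine gap is in your treatment of the admissible exponents. The Caccioppoli inequality degenerates at the power $s=\tfrac12$ (the coefficient $\tfrac{2s-1}{2s}$ in front of $\int|Dv|^2\psi^2$ vanishes there), so the one-step inequality $\parallel u^{s}\parallel_{L^{2\alpha}(\Q_{\rho'})}\le \widetilde{C}\parallel u^{s}\parallel_{L^{2\beta}(\Q_{\rho''})}$ only holds with a controlled constant when $|s-\tfrac12|\ge\delta$, and the constant blows up as $\delta\to 0$. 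Your argument covers $p\ge 1$ (all iterates stay $\ge 1$) and $p<0$ (the condition is automatic), and then asserts ``actually all $p\ne 0$'' without justification; but for $0<p<1$ some iterate $p\kappa^{k}$ can land arbitrarily close to the forbidden value, so no uniform $\delta$ exists and the iteration constant is not under control. The paper closes this hole by first proving the estimate for the special exponents $p_m=\frac{\alpha^{m}(\alpha+1)}{2\beta}$, $m\in\Z$, for which the non-degeneracy condition holds with a $\delta$ depending only on $q$ and $Q$, and then passing to an arbitrary $p>0$ by sandwiching $p_m\le p<p_{m+1}$ and interpolating; some such device is indispensable to obtain \eqref{est-it} for \emph{every} $p\ne 0$, and it is absent from your proposal.

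A secondary but non-trivial point is the constant bookkeeping, which you explicitly defer. Your gain ratio $\kappa=\alpha/\beta$ per step is the one that actually makes the chain close (the Sobolev step outputs an $L^{2\alpha s}$ bound while the next Caccioppoli step needs an $L^{2\beta s'}$ input), and it produces geometric sums of the type $\sum_j(\beta/\alpha)^{j}=\alpha/(\alpha-\beta)$, finite precisely because $q>\tfrac34(Q+2)$; the exponents $\gamma=\frac{2\alpha^{2}\beta}{\alpha-1}$ and $9(Q+2)$ in the statement come from a different tally, so ``one lands on the stated power'' is an assertion, not a verification --- you must actually check that your sums are dominated by the stated quantities. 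Finally, your preliminary claim that $u^{s}$ is a weak sub-solution for $s\ge 1$ requires the truncation $g_{n,p}$ and a limiting argument to make $u^{2s-1}\psi^{2}$ an admissible test function; you correctly flag this as needed, and the paper carries it out inside the proof of the Caccioppoli inequality.
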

\begin{remark}
	Estimate \eqref{est-it} is meaningful whenever the integral appearing in its right-hand side is finite. 
	Note that \eqref{est-it} is an estimate of the \emph{infimum} of $u$ when $p<0$. More precisely, we have that
	\begin{align}
    		\sup_{\Q_{\r}(z_{0})} u \, & \le \,  
			\frac{C^{\frac1p} \left( 1 + \parallel a \parallel_{L^q (\Q_r(z_{0}))}^{2} + \parallel b \parallel_{L^q (\Q_r(z_{0}))}^{2} + \parallel c \parallel_{L^q (\Q_r(z_{0}))} \right)^{\frac{\g}{p}} }{(r-\r)^{\frac{9 (Q+2)}{p}}}
			\left( \int_{\Q_{r}(z_{0})} u^{p} \right)^{\frac1p}, 
			\qquad \forall p > 0, \\
		\inf_{\Q_{\r}(z_{0})} u \, & \ge \,  \frac{
			C^{\frac1p} \left( 1 + \parallel a \parallel_{L^q (\Q_r(z_{0}))}^{2} + \parallel b \parallel_{L^q (\Q_r(z_{0}))}^{2} + \parallel c \parallel_{L^q (\Q_r(z_{0}))} \right)^{\frac{\g}{p}} }{(r-\r)^{\frac{9 (Q+2)}{p}}}
			\left( \int_{\Q_{r}(z_{0})} \frac{1}{u^{|p|}} \right)^{\frac1p}, 
			\hspace{4mm} \forall p < 0, \label{seconda}
	\end{align}
\end{remark}

\begin{corollary}
    \label{bounded}
    Let $u$ be a weak solution to $\L u=0$ in $\O$.  
    Then for every $p \ge 1$ we have
    \begin{equation}
        \label{boundedeq}
    	\sup_{\Q_{\r}(z_{0})} |u|^{p} \, \le \, \frac{
	C \left( 1 + \parallel a \parallel_{L^q (\Q_r(z_{0}))}^{2} + \parallel b \parallel_{L^q (\Q_r(z_{0}))}^{2} + \parallel c \parallel_{L^q (\Q_r(z_{0}))} 
	\right)^{\g} }{ (r-\r)^{9 (Q+2)}} \int_{\Q_{r}(z_{0})} |u|^{p}.
    \end{equation}
\end{corollary}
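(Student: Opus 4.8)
The plan is to deduce Corollary \ref{bounded} from Theorem \ref{iteration} by decomposing $u$ into its positive and negative parts. Since $u$ is a weak solution of $\L u=0$, it is simultaneously a weak sub-solution and a weak super-solution, hence $-u$ is also a weak sub-solution. I would first show that $u^+:=\max\{u,0\}$ and $u^-:=\max\{-u,0\}$ are \emph{non-negative weak sub-solutions} of $\L u=0$. This amounts to checking that $u^\pm,D_{m_0}u^\pm,Yu^\pm\in L^2_\loc(\O)$, for which one uses the Stampacchia-type chain rule $D_{m_0}u^+=\mathbf{1}_{\{u>0\}}D_{m_0}u$ and $Yu^+=\mathbf{1}_{\{u>0\}}Yu$, the second identity being obtained by mollifying $u$ with the group convolution adapted to the dilations $\{\d_r\}_{r>0}$ (which commutes, up to a controlled commutator, with the transport field $Y$) and passing to the limit. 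Once this is granted, inserting into \eqref{weak-sol2} test functions supported where $u>0$, together with a standard truncation/approximation argument, yields that $u^+$ satisfies \eqref{subsol}; the same applied to $-u$ gives it for $u^-$.

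The second step is the observation that, for $p\ge 1$, the Moser iteration carried out in the proof of Theorem \ref{iteration} relies only on the sub-solution inequality \eqref{subsol}: the test functions used there are non-negative powers of the (non-negative) solution times cut-offs, and the terms produced by the equation are estimated only in the favourable direction. Consequently the estimate \eqref{est-it} holds verbatim with $u$ replaced by the non-negative sub-solution $u^+$, and likewise by $u^-$, namely
\begin{equation*}
	\sup_{\Q_{\r}(z_0)}(u^{\pm})^{p}\ \le\ \frac{C\left(1+\|a\|_{L^q(\Q_r(z_0))}^{2}+\|b\|_{L^q(\Q_r(z_0))}^{2}+\|c\|_{L^q(\Q_r(z_0))}\right)^{\g}}{(r-\r)^{9(Q+2)}}\int_{\Q_r(z_0)}(u^{\pm})^{p},
\end{equation*}
with the same constants $C=C(p,\l)$ and $\g=\g(p,q)$ as in Theorem \ref{iteration}.

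Finally, for $p\ge 1$ one has the pointwise identity $|u|^{p}=(u^+)^{p}+(u^-)^{p}$, so that $\sup_{\Q_\r(z_0)}|u|^{p}\le \sup_{\Q_\r(z_0)}(u^+)^{p}+\sup_{\Q_\r(z_0)}(u^-)^{p}$, while $\int_{\Q_r(z_0)}(u^+)^{p}+\int_{\Q_r(z_0)}(u^-)^{p}=\int_{\Q_r(z_0)}|u|^{p}$. Summing the two displayed inequalities gives \eqref{boundedeq} with no change in the constant. The only genuine obstacle is the first step: proving that the positive and negative parts of a weak solution are themselves weak sub-solutions in this degenerate, non-Euclidean setting, since the transport term $Yu$ does not localize under truncation as readily as the elliptic term $\div(A Du)$ and requires the group-mollification/commutator argument; the rest is bookkeeping with the constants already furnished by Theorem \ref{iteration}.
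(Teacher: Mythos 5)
Your proposal is correct and takes essentially the same route as the paper: the authors simply defer to \cite[Corollary 1.4]{PP}, whose argument is precisely your decomposition $u=u^{+}-u^{-}$, the verification that $u^{\pm}$ are non-negative weak sub-solutions, and the application of the sub-solution case $p>\tfrac12$ of the estimate (stated here as Proposition \ref{propsubsup}), followed by the identity $|u|^{p}=(u^{+})^{p}+(u^{-})^{p}$.
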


\begin{proposition}
	\label{propsubsup}
	Sub and super-solutions also verify estimate \eqref{est-it} for suitable values of $p$. More precisely, \eqref{est-it} holds for
	\begin{enumerate}
		\item $p > \tfrac12$ or $p < 0$, if $u$ is a non-negative weak sub-solution of \eqref{eq1};
		\item $p \in ]0, \tfrac12[$, if $u$ is a non-negative weak super-solution of \eqref{eq1}. 
	\end{enumerate}
\end{proposition}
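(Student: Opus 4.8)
The plan is to repeat the proof of Theorem \ref{iteration} line by line, replacing at each step the identity coming from $\L u = 0$ by the appropriate one-sided inequality, and to check for which $p$ this substitution is legitimate. Recall the mechanism of that proof: for a fixed exponent $P \ne 0$ and $\delta > 0$, writing $w := (u+\delta)^{P}$, one combines a Sobolev-type inequality $\|w\|_{L^{2\alpha}(\cdot)} \le c_1 \|D_{m_0} w\|_{L^{2}(\cdot)}$ — obtained from the representation formula \eqref{eq4}--\eqref{eq5} and the positivity of $\Gamma$ — with a Caccioppoli inequality $\|D_{m_0} w\|_{L^{2}(\cdot)} \le c_2 \|w\|_{L^{2\beta}(\cdot)}$ — obtained by testing the weak formulation with $\phi = \psi^{2}(u+\delta)^{2P-1}$, $\psi$ a cut-off between $\Q_\rho$ and $\Q_r$. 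Since $\alpha > \beta > 1$ under \textbf{(H3)}, the composition of the two inequalities produces a true gain of integrability, and iterating with $P = p, p\alpha, p\alpha^{2},\dots$ yields \eqref{est-it} with the constants appearing there. It is therefore enough to reobtain the two inequalities for $w$ built from a sub-solution (resp. super-solution) in the announced ranges of $p$.

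The Sobolev-type inequality is insensitive to the weakening: its proof uses the representation $w = \int \Gamma\,\K w$ and pointwise bounds on $\K w$, which rest only on one-sided control of $\L w$ and on $\Gamma \ge 0$, and hence carry over to $w$ arising from a sub- or super-solution. The range of $p$ enters through the Caccioppoli inequality. The test function $\phi = \psi^{2}(u+\delta)^{2P-1}$ is nonnegative — here $u+\delta > 0$ makes $(u+\delta)^{2P-1}$ well defined for every $P$ — so it is admissible in \eqref{subsol}. Inserting it, the second-order term produces the coercive contribution with coefficient $\tfrac{2P-1}{P^{2}}$ in front of $\int \psi^{2}\langle A D w, D w\rangle$, up to gradient terms absorbable by Young's inequality, and, after integrating by parts in the first-order terms $\langle b, Du\rangle\phi$ and $u\langle a, D\phi\rangle$, one is left with the two quantities $\tfrac{1}{2P}\int\psi^{2}(u+\delta)^{2P}\div b$ and $\tfrac{2P-1}{2P}\int\psi^{2}(u+\delta)^{2P}\div a$ plus harmless pieces. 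Since $\div a,\div b \ge 0$ by \textbf{(H3)}, these quantities have a favourable sign, and may be discarded, exactly according to the signs of $\tfrac{1}{2P}$ and $\tfrac{2P-1}{2P}$ once the inequality has been put in the form ``energy $\le$ data'' (which requires dividing by $2P-1$, with a sign flip when $2P-1<0$); whichever first-order term cannot be routed through a divergence for the value of $P$ in question is instead dominated directly by $\|b\|_{L^{q}(\Q_r)}$ (or $\|a\|_{L^{q}(\Q_r)}$) via Hölder's inequality and reabsorbed, which is where $q > \tfrac34(Q+2)$ is used. Carrying out this sign bookkeeping, one finds that the Caccioppoli inequality holds, for a sub-solution, in the range $p > \tfrac12$ (and, as in the proof of Theorem \ref{iteration} for solutions, the case $p < 0$ is then recovered by the accompanying reduction), and, for a super-solution — obtained by applying \eqref{subsol} to $-u$, i.e.\ reversing every inequality — in the range $0 < p < \tfrac12$; since multiplication by $\alpha > 1$ keeps each of these ranges invariant (up to organising the iteration in the super-solution case so that the running exponent stays admissible), the iteration goes through and gives \eqref{est-it}.

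The main obstacle is precisely this sign bookkeeping in the Caccioppoli step: one must verify that each term not controlled by the $L^{q}$ norms of $a,b,c$ — in particular the two divergence terms above — sits on the correct side of the inequality for the given $p$ and can simply be dropped, and, when it cannot, that the corresponding first-order term is dominated by the $L^{q}$ norm of the coefficient and reabsorbed; the borderline behaviour near $p = \tfrac12$ and the case $p<0$ are the delicate points, and they are exactly the reason why the two items of the statement exclude $p = \tfrac12$ and split $\R\setminus\{0\}$ between sub- and super-solutions. Apart from this, no new idea beyond the proof of Theorem \ref{iteration} is needed; and since a function that is simultaneously a sub- and a super-solution is a solution, items (1)--(2) together with Theorem \ref{iteration} are mutually consistent and cover every $p \ne 0$.
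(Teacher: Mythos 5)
Your overall strategy --- rerun the Sobolev and Caccioppoli steps with one--sided information and track the sign of the energy coefficient --- is the one the paper has in mind (it omits the proof, referring back to the argument of Theorem \ref{iteration}), and your treatment of sub-solutions for $p>\tfrac12$ is correct. The difficulty is that the two places where you defer the actual sign bookkeeping are exactly the places where the argument does not go through as described. First, the case $p<0$ for a sub-solution is not ``recovered by the accompanying reduction'': in the proof of Theorem \ref{iteration} the passage to $p<0$ uses the full equation, i.e.\ both one-sided inequalities, so nothing is inherited by a sub-solution alone. If you test \eqref{subsol} with $\phi=\psi^{2}(u+\delta)^{2P-1}\ge 0$ for $P<0$, the coefficient $\tfrac{2P-1}{P^{2}}$ multiplying $\int\psi^{2}\langle A Dw,Dw\rangle$ is negative, so the sub-solution inequality yields a \emph{lower} bound on the energy, not a Caccioppoli bound; for $P<0$ it is the super-solution inequality that produces the coercive estimate (consistently with the fact that $w=(u+\delta)^{P}$, $P<0$, is a sub-solution --- as the Sobolev step requires --- precisely when $u$ is a super-solution). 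Your bookkeeping, carried to completion, therefore attaches $p<0$ to super-solutions, and no argument for item (1) with $p<0$ is actually present in your write-up.

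Second, the claim that ``multiplication by $\a>1$ keeps each of these ranges invariant'' fails for $]0,\tfrac12[$: the iteration needs the Caccioppoli inequality at every exponent $p_{n}=p\a^{n}$, and $p\a^{n}>\tfrac12$ for all large $n$, where the super-solution version of the inequality is no longer available because $\tfrac{2p_{n}-1}{p_{n}^{2}}$ has changed sign. There is no way to ``organise the iteration so that the running exponent stays admissible'' when the target is a supremum bound, which forces $p_{n}\to+\infty$; as written your argument gives only one step of integrability improvement for $0<p<\tfrac12$. These gaps are not presentational: for the heat operator ($m_{0}=N$, $B=0$, $a=b=c=0$) the sub-solution $u=|x|^{\e}$, resp.\ the super-solution $u=|x|^{-\e}$, with $\e>0$ small enough that $u,\,Du\in L^{2}_{\loc}$, has $\sup_{\Q_{\r}}u^{p}=+\infty$ while $\int_{\Q_{r}}u^{p}<+\infty$ for $p<0$, resp.\ for $0<p<\tfrac12$. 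So the matching of the one-sided hypothesis to the exponent range must be extracted from the explicit sign analysis rather than asserted, and you should carry out that computation and state precisely which ranges it delivers before concluding.
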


\medskip

We conclude this introduction with some motivations for the study of operators $\L$ in the form \eqref{eq1}. Degenerate 
equations of the form $\L u = 0$ naturally arise in the theory of stochastic processes, kinetic theory of gases and mathematical 
finance. For instance, if $\left(W_t \right)_{t \ge 0}$ denotes a real Brownian motion, then the simplest non-trivial 
Kolmogorov operator 
\begin{equation*}
    \frac{1}{2} \p_{vv} + v \p_x + \p_t, \qquad t \ge 0, \hspace{1mm}
    (v,x) \in \R^2
\end{equation*}
is the infinitesimal generator of the classical Langevin's stochastic equation
that describes the position $X$ and the velocity $V$ of a particle in the phase
space (cf. \cite{L})
\begin{equation*}
    \begin{cases}
    d V_t &= \hspace{2mm} d W_t, \\
    d X_t &= \hspace{2mm} V_t \hspace{.5mm} dt.
    \end{cases}
\end{equation*}
Notice that in this case we have $1=m_0 < N=2$. 

Linear Fokker-Planck equations (cf. \cite{DV} and \cite{R}), non-linear 
Boltzmann-Landau equations (cf. \cite{L} and \cite{CE}) and non-linear 
equations for Lagrangian stochastic models commonly used in the simulation of
turbulent flows (cf. \cite{BJT}) can be written in the form
\begin{equation}
    \label{turb}
    \sum \limits_{i,j=1}^n \p_{v_i} (a_{ij} \p_{v_j} f) + 
    \sum \limits_{j=1}^n v_j\p_{x_j} f + \p_t f = 0,
    \qquad t \ge 0, v \in \R^n, x \in \R^n
\end{equation}
with the coefficients $a_{ij}= a_{ij}(t,v,x,f)$ that may depend on the solution
$f$ through some integral expressions. It is clear that equation \eqref{turb} is a particular case of 
$\L u=0$ with $n=m_0 < d =2n$ and
\begin{equation*}
    B = \begin{pmatrix}
            \OO_n & \OO_n \\
            \I_n  & \OO_n
        \end{pmatrix}
\end{equation*}
where $\I_n$ and $\OO_n$ denote the $(n \times n)-$identity matrix and the $(n \times n)-$zero matrix, respectively.

In mathematical finance, equations of the form $\L u = 0$ appear in various models for pricing of path-dependent derivatives such as Asian options (cf., for instance, \cite{BPVE} \cite{PA}), stochastic velocity models (cf. \cite{HR} \cite{P}) and in theory of stochastic utility (cf. \cite{ABM} \cite{AP}).

\medskip

This note is organized as follows. In Section \ref{preliminaries} we recall some 
known facts about operators $\L$ and $\K$, and we give some preliminary 
results. In Section \ref{secsobolev} we prove Theorem \ref{sobolev} and Proposition \ref{pcaccB1},
which is an intermediate result (Caccioppoli type inequality for weak solutions to $\L u = 0$) needed for the bootstrap 
argument. Finally, in Section \ref{moser} we deal with the Moser's iterative method.

\section{Preliminaries}
\label{preliminaries}

In this Section we recall notation and results we need in order to deal with the non-Euclidean 
geometry underlying the operators $\L$ and $\K$. We refer to the articles \cite{CPP} and 
\cite{LP} for a comprehensive treatment of this subject. The operator $\K$ is invariant 
with respect to a Lie product on $\R^{N+1}$. More precisely, we let 
\begin{equation}
    \label{exp}
    E(s) = \exp (-s B), \qquad s \in \R,
\end{equation}
and we denote by $\ell_{\zeta}, \zeta \in \R^{N+1}$, the left translation 
$\ell_{\zeta} (z) = \zeta \circ z$ in the group law
\begin{equation}
    \label{grouplaw}
    (x,t) \circ (\xi, \tau) = (\xi + E(\tau) x, t + \tau ), \hspace{5mm} (x,t).
    (\xi, \tau) \in \R^{N+1}.
\end{equation}
Thus we have
\begin{equation*}
    \K \circ \ell_{\zeta} = \ell_{\zeta} \circ \K.
\end{equation*}
This means that, if $v(x,t) = u \big((\xi, \tau) \circ (x,t)\big)$ and $g(x,t) = f \big((\xi, \tau) \circ (x,t)\big)$, 
we have 
\begin{equation*}
    \K u = f \quad \iff \quad \K v = g.
\end{equation*}

We recall that, by \cite{LP} (Propositions 2.1 and 2.2), assumption \textbf{(H2)}
is equivalent to assume that, for some basis on $\R^N$, the matrix $B$ has the 
canonical form
\begin{equation}
    \label{B}
    B =
    \begin{pmatrix}
        *   &   *   & \ldots &    *    &   *   \\  
        B_1   &    *   & \ldots &    *    &   *  \\
        \OO    &    B_2  & \ldots &  *    &   *    \\
        \vdots & \vdots & \ddots & \vdots & \vdots \\
        \OO    &  \OO    &    \ldots & B_\k    & *
    \end{pmatrix}
\end{equation}
where every $B_k$ is a $m_{k} \times m_{k-1}$ matrix of rank $m_j$, $j = 1, 2, \ldots, \k$ 
with 
\begin{equation*}
    m_0 \ge m_1 \ge \ldots \ge m_\k \ge 1 \hspace{5mm} \text{and} \hspace{5mm} 
    \sum \limits_{j=0}^\k m_j = N
\end{equation*}
and the blocks denoted by \lq \lq *" are arbitrary. In the sequel we shall assume that $B$ has the 
canonical form \eqref{B}.

We denote by $\Gamma ( \cdot, \zeta)$ the fundamental solution of $\K$ in \eqref{eq2} 
with pole in $\zeta \in \R^{N+1}$. An explicit expression of $\Gamma ( \cdot, \zeta )$
has first been constructed by Kolmogorov \cite{K1} for operators in the form \eqref{turb}, 
then by H\"ormander in \cite{H} under more general conditions
\begin{equation} \label{eq-Gamma0}
    \Gamma (z,\zeta) = \Gamma (\zeta^{-1} \circ z, 0 ), \hspace{4mm} 
    \forall z, \zeta \in \R^{N+1}, \hspace{1mm} z \ne \zeta,
\end{equation}
where
\begin{equation} \label{eq-Gamma0-b}
    \Gamma ( (x,t), (0,0)) = \begin{cases}
    \frac{(4 \pi)^{-\frac{N}{2}}}{\sqrt{\text{det} C(t)}} \exp \left( - 
    \frac{1}{4} \langle C^{-1} (t) x, x \rangle - t \, \trace (B) \right), \hspace{3mm} & 
    \text{if} \hspace{1mm} t > 0, \\
    0, & \text{if} \hspace{1mm} t < 0,
    \end{cases}
\end{equation}
 and
 \begin{equation*}
     C(t) = \int \limits_0^t \hspace{1mm} E(s) \, A_0 \, E^T(s) \, ds,
 \end{equation*}
where $E(\cdot)$ is the matrix defined in \eqref{exp}. Note that assumption 
 \textbf{(H2)} implies that $C(t)$ is strictly positive for every $t>0$ 
 (see \cite{LP}, Proposition A.1). 
 
Among the operators $\K$ where the matrix $B$ is of the form \eqref{B}, the ones for 
which the $*-$blocks are equal to zero play a central role. Indeed, let us consider the 
principal part operator $\K =  \D_{m_0} + Y_0$, where $Y_0 = \langle  B_0 x, D \rangle - 
\p_t$ and 
\begin{equation}
    \label{B_0}
    B_0 =
    \begin{pmatrix}
        \OO  &   \OO  & \ldots &    \OO   &   \OO  \\  
        B_1   &    \OO  & \ldots &    \OO   &   \OO  \\
        \OO    &    B_2  & \ldots &  \OO    &   \OO   \\
        \vdots & \vdots & \ddots & \vdots & \vdots \\
        \OO    &  \OO    &    \ldots & B_\k    &   \OO
    \end{pmatrix}\end{equation}
The operator $\K_{0}$ is invariant with respect to the dilations defined as 
 \begin{equation}
    \label{gdil}
    \d_{r} = \text{diag} ( r \I_{m_0}, r^3 \I_{m_1}, \ldots, r^{2\k+1} \I_{m_\k}, 
    r^2), \qquad \qquad r > 0.
\end{equation}
In order to explain the importance of this invariance property we introduce for every 
positive $r$ the scaled operator
\begin{equation*}
    \K_{r} = r^2 \left( \d_r \circ \K \circ \d_{\frac{1}{r}} \right) .
\end{equation*}
In order to explicitly write $\K_r$ we note that, if 
\begin{equation}
    \label{B1}
    B =
      \begin{pmatrix}
         B_{0,0} &    B_{0,1}  & \ldots &     B_{0, \k - 1}  &   B_{0, \k }  \\  
        B_1   &    B_{1,1}  & \ldots &     B_{\k - 1 , 1}  &   B_{\k , 1}  \\
        \OO    &    B_2  & \ldots &   B_{\k - 1, 2}   &  B_{\k , 2}   \\
        \vdots & \vdots & \ddots & \vdots & \vdots \\
        \OO    &  \OO    &    \ldots & B_\k    &   B_{\k,\k}
    \end{pmatrix},
\end{equation}
where $B_{i,j}$ are the $m_i \times m_j$ blocks denoted by $\lq \lq * "$ in \eqref{B},
then we can rewrite $\K_r$ as follows
\begin{equation}
    \label{Kr}
    \K_r = \div (A_0 D) + Y_r,
\end{equation}
where
\begin{equation}
    \label{Y_r}
    Y_r := \langle B_r \, x, D \rangle - \p_t 
\end{equation}
and $B_r := r^2 \, D_r \, B \, D_{\frac{1}{r}},$  i.e.
\begin{equation*}
    B_r = 
    \begin{pmatrix}
        r^2 B_{0,0} &   r^4 B_{0,1}  & \ldots &    r^{2\k} B_{0, \k - 1}  &  r^{2\k+2} B_{0, \k }  \\  
        B_1   &   r^2 B_{1,1}  & \ldots &    r^{2\k-2} B_{\k - 1 , 1}  &   r^{2\k} B_{\k , 1}  \\
        \OO    &    B_2  & \ldots &  r^{2\k-4} B_{\k - 1, 2}   &  r^{2\k-2} B_{\k , 2}   \\
        \vdots & \vdots & \ddots & \vdots & \vdots \\
        \OO    &  \OO    &    \ldots & B_\k    &   r^2 B_{\k,\k}
    \end{pmatrix}.
\end{equation*}

Note that 
\begin{equation*}
    B_r=B \qquad {\rm for \, \, every \,} r>0 
\end{equation*}
if, and only if $B_{j,k}=\OO$ with $j \le k$. In this case, if $v(x,t) = u \big(\d_{r} (x,t)\big)$ and $g(x,t) = f \big(\d_{r} (x,t)\big)$, then
\begin{equation*}
    \K u = f \quad \iff \quad \K v = r^2 g.
\end{equation*}
Since $K_0$ is the blow-up limit of $K_r$, the dilation group $(\d_r)_{r>0}$ plays a central role also for non-dilation invariant operators.

We next introduce a norm which is homogeneous of degree $1$ with respect to the dilations 
$(\d_{r})_{r>0}$ and a corresponding quasi-distance which is invariant with respect to the 
translation group for the case of $*-$blocks equal to zero.
\begin{definition}
    \label{hom-norm}
    Let $\a_1, \ldots, \a_N$ be the positive integers such that
    \begin{equation*}
       \diag \left( r^{\a_1}, \ldots, r^{\a_N}, r^2 \right) = \d_r.
    \end{equation*}
    If $\parallel z \parallel = 0$ we set $z=0$ while, if $z \in \R^{N+1} 
    \setminus \{ 0 \}$ we define $\parallel z \parallel = r$ where $r$ is the 
    unique positive solution to the equation
    \begin{equation*}
        \frac{x_1^2}{r^{2 \a_1}} + \frac{x_2^2}{r^{2 \a_2}} + \ldots 
        + \frac{x_N^2}{r^{2 \a_N}} + \frac{t^2}{r^4} = 1.
    \end{equation*}
    We define the quasi-distance $d$ by
    \begin{equation*}
        d(z, w) = \parallel z^{-1} \circ w \parallel , \hspace{5mm} z, w \in 
        \R^{N+1}.
    \end{equation*}
\end{definition}
\begin{remark}
    The Lebesgue measure is invariant with respect to the translation group 
    associated to $\K$, since $\det E(t) = e^{t \hspace{1mm} \text{\rm trace} \,
    B} = 1$, where $E(t)$ is the exponential matrix of equation
    \eqref{exp}. Moreover, since $\det \d_r = r^{Q+2}$, we also have 
    \begin{equation*}
        \meas \left( \Q_r(z_0) \right) = r^{Q+2} \meas \left( \Q_1(z_0) \right), \qquad \forall
        \ r > 0, z_0 \in \R^{N+1},
    \end{equation*}
where
\begin{equation}
       \label{hom-dim}
         Q = m_0 + 3 m_1  + \ldots + (2\k+1) m_\k.
\end{equation}
The natural number $Q+2$ is usually called the \textit{homogeneous dimension of} $\R^{N+1}$ \textit{with respect to} 
$(\d_{r})_{r > 0}$.
\end{remark}
\begin{remark}
    The norm $\parallel \cdot \parallel$ is homogeneous of degree $1$ with respect to $(\d_r )_{r>0}$, that is
    \begin{equation*}
        \parallel \d_\r (x,t) \parallel = \r  \parallel  (x,t) \parallel
        \qquad \forall \r >0 \hspace{2mm} \text{and} \hspace{2mm} (x,t) \in \R^{N+1}.
    \end{equation*}
    Actually in $\R^{N+1}$ all the norms, that are $1$-homogeneous with respect to 
    $(\d_r)_{r>0}$, are equivalent. In particular, the norm introduced in Definition 
    \ref{hom-norm} is equivalent to the following one
    \begin{equation*}
        \parallel (x,t) \parallel_1 = |x_1|^{\frac{1}{\a_1}} + \ldots + 
        |x_N|^{\frac{1}{\a_N}} + | t |^{\frac{1}{2}},
    \end{equation*}
    where the homogeneity with respect to $(\d_r)_{r>0}$ can easily be showed. We 
    prefer the norm of Definition \ref{hom-norm} to $\parallel \cdot \parallel_1$ 
    because its level sets (spheres) are smooth surfaces.
\end{remark}

When $\K_0$ is dilation invariant with respect to $(\d_r )_{r>0}$, also its fundamental solution $\G_0$ is a homogeneous 
function of degree $- Q$, namely
\begin{equation*}
         \Gamma_0 \left( \d_{r}(z), 0 \right) = r^{-Q} \hspace{1mm} \Gamma_0
        \left( z, 0 \right), \hspace{5mm} \forall z \in \R^{N+1} \setminus
        \{ 0 \}, \hspace{1mm} r > 0.
\end{equation*}
This property implies an $L^p$ estimate for Newtonian potential (c. f. for instance 
\cite{FO}).
  \begin{proposition}
    Let $\a \in ]0, Q+2[$ and let $G \in C (\R^{N+1} \setminus \{ 0 \})$ be a 
    $\d_\l-$homogeneous function of degree $\a - Q - 2$. If $f \in L^p 
    (\R^{N+1})$ for some $p \in ]1, +\infty[$, then the function
    \begin{equation*}
        G_f (z) := \int_{\R^{N+1}} G ( \z^{-1} \circ z) f(\z) d\z,
    \end{equation*}
    is defined almost everywhere and there exists a constant $c = c(Q, p)$ such
    that 
    \begin{equation*}
        \parallel G_f \parallel_{L^q(\R^{N+1}} \le c \max_{\parallel z 
        \parallel = 1} |G(z)| \parallel f \parallel_{L^p (\R^{N+1})},
    \end{equation*}
    where $q$ is defined by
    \begin{equation*}
        \frac{1}{q} = \frac{1}{p} - \frac{\a}{Q+2}.
    \end{equation*}
 \end{proposition}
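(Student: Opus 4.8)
The plan is to recognise the statement as the Hardy--Littlewood--Sobolev inequality on the homogeneous group $(\R^{N+1},\circ,(\d_r)_{r>0})$ and to reduce it to the model case of a pure power kernel. The sphere $\{\|z\|=1\}$ is the Euclidean unit sphere (put $r=1$ in Definition~\ref{hom-norm}), hence compact, so $M:=\max_{\|z\|=1}|G(z)|$ is finite; writing any $w\neq 0$ as $w=\d_{\|w\|}(\omega)$ with $\|\omega\|=1$ and using that $G$ is $\d_\l$-homogeneous of degree $\a-Q-2$ gives the pointwise bound $|G(w)|\le M\,\|w\|^{\a-Q-2}$. Consequently $|G_f(z)|\le M\,(K*|f|)(z)$, where $K(w):=\|w\|^{\a-Q-2}$ and $(K*g)(z):=\int_{\R^{N+1}}K(\zeta^{-1}\circ z)\,g(\zeta)\,d\zeta$, so it suffices to prove $\|K*g\|_{L^q}\le c(Q,p)\,\|g\|_{L^p}$ for $g\ge 0$.

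Two structural facts underlie the estimate. First, a polar-coordinate formula: $\meas(\d_r E)=r^{Q+2}\meas(E)$ produces a Radon measure $d\s$ on $\{\|z\|=1\}$ with $\int_{\R^{N+1}}h\,dz=\int_0^\infty\!\int_{\|z\|=1}h(\d_\rho\omega)\,\rho^{Q+1}\,d\s(\omega)\,d\rho$. Second, unimodularity: $\det E(t)=e^{t\,\trace B}=1$, so Lebesgue measure is bi-invariant and invariant under $w\mapsto w^{-1}$; hence $d\zeta=dw$ in the substitution $\zeta\mapsto w=\zeta^{-1}\circ z$ and the $L^p$ norm is translation invariant. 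From the first fact, $\int_{\|w\|\le R}K(w)\,dw=c_Q R^\a/\a<\infty$ (using $\a>0$) and, for $1<p<(Q+2)/\a$, $\|K\,\mathbf{1}_{\{\|w\|>R\}}\|_{L^{p'}}=c_{Q,p}\,R^{\,\a-(Q+2)/p}$ with $\a-(Q+2)/p<0$ (using $\a<Q+2$ and $p<(Q+2)/\a$, i.e.\ exactly $q<\infty$).

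Then I would carry out Hedberg's argument. Split $(K*g)(z)=\int_{\|w\|\le R}K(w)g(z\circ w^{-1})\,dw+\int_{\|w\|>R}K(w)g(z\circ w^{-1})\,dw$. Summing the first integral over the dyadic shells $\{2^{-j-1}R<\|w\|\le 2^{-j}R\}$, $j\ge 0$, and bounding the average of $g$ over each quasi-ball by the Hardy--Littlewood maximal function $\mathcal{M}g(z)$ (the maximal operator is bounded on $L^p$, $p>1$, because $(\R^{N+1},d,dz)$ is a doubling quasi-metric measure space) yields $\le C\,R^\a\,\mathcal{M}g(z)$; Hölder bounds the second integral by $\|K\,\mathbf{1}_{\{\|w\|>R\}}\|_{L^{p'}}\|g\|_{L^p}=C\,R^{\,\a-(Q+2)/p}\|g\|_{L^p}$. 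Optimising in $R$ (the relevant power is $\a+\bigl((Q+2)/p-\a\bigr)=(Q+2)/p$) gives the pointwise inequality $(K*g)(z)\le C\,(\mathcal{M}g(z))^{p/q}\,\|g\|_{L^p}^{1-p/q}$, and hence $\|K*g\|_{L^q}\le C\,\|g\|_{L^p}^{1-p/q}\,\|\mathcal{M}g\|_{L^p}^{p/q}\le C\,\|g\|_{L^p}$ by the maximal theorem; combining with the first paragraph proves the proposition, with $c(Q,p)$ absorbing $M$. Alternatively, the splitting $K=K\mathbf{1}_{\{\|w\|\le R\}}+K\mathbf{1}_{\{\|w\|>R\}}$ shows directly that $K*$ is of weak type $(p,q)$, and Marcinkiewicz interpolation between two exponents in $(1,(Q+2)/\a)$ upgrades this to the strong estimate; this is the route in \cite{FO}.

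The non-automatic ingredients are the polar-coordinate decomposition and the $L^p$-boundedness of the maximal operator on the doubling space $(\R^{N+1},d,dz)$; both are classical. I expect no real obstacle beyond bookkeeping the exponents — verifying $\a-(Q+2)/p<0\iff p<(Q+2)/\a\iff q<\infty$, and that the optimisation produces the exponents $p/q$ and $1-p/q$ through the identity $1/q=1/p-\a/(Q+2)$ — so that one could equally well simply invoke \cite{FO}.
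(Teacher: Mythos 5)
Your proof is correct. Note, however, that the paper does not actually prove this proposition: it states it as a known $L^p$ estimate for potentials on homogeneous groups and refers the reader to Folland \cite{FO}. Your argument is therefore a genuine (and self-contained) substitute rather than a reproduction. The route you present in detail is Hedberg's: the reduction $|G(w)|\le M\|w\|^{\a-Q-2}$ by compactness of $\{\|z\|=1\}$ and $\d_\l$-homogeneity, the polar-coordinate computation of $\int_{\|w\|\le R}K$ and of $\|K\mathbf{1}_{\{\|w\|>R\}}\|_{L^{p'}}$ (whose finiteness is exactly $q<\infty$), the near/far splitting, the dyadic-shell bound of the near part by $R^{\a}\mathcal{M}g(z)$, and the optimisation in $R$ giving $(K*g)(z)\le C(\mathcal{M}g(z))^{p/q}\|g\|_{L^p}^{1-p/q}$; all the exponent bookkeeping checks out, and the only imported facts (polar coordinates for $(\d_r)$, unimodularity, and the $L^p$ maximal theorem on the doubling quasi-metric space $(\R^{N+1},d,dz)$) are classical. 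The alternative you sketch at the end --- weak-type $(p,q)$ bounds from the same kernel truncation plus Marcinkiewicz interpolation --- is the one actually carried out in \cite{FO}, so either route closes the gap; Hedberg's has the mild advantage of yielding the strong bound directly at every admissible $p$ without interpolating between auxiliary exponents, at the cost of invoking the maximal theorem. One cosmetic remark: the statement displays $\max_{\|z\|=1}|G(z)|$ explicitly with $c=c(Q,p)$, and your proof produces exactly that factorisation, so the constant does not need to ``absorb'' $M$.
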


It is known that homogeneous operators provide a good approximation of the 
non-homogeneous ones. In order to be more specific, let us consider a homogeneous operator of 
the form  
\begin{equation*}
    \K_0 = \div (A_0 D) + \langle B_0 x,  D \rangle - \p_t,
\end{equation*} 
where $B_0$ is the matrix in \eqref{B_0}, and denote by $\G_{0}$ the fundamental 
solution of $\K_{0}$. If $\G$ denotes the fundamental solution of $\K$ defined in \eqref{eq-Gamma0-b}, then, for every $M>0$, there exists a positive constant $c$ such that 
\begin{equation}
    \label{estg}
    \frac{1}{c} \, \G_{0} \, \le \, \G(z) \, \le \, c \, \G_{0} (z)
\end{equation}
for every $z \in \R^{N+1}$ such that $\G_{0} (z) \ge M$ (see \cite{LP}, Theorem 3.1).

We define the $\G-$\textit{potential} of the function $f \in L^1(\R^{N+1})$ 
as follows
 \begin{equation}
     \label{L0p}
     \G (f) (z) = \int_{\R^{N+1}} \G (z, \z ) f(\z) d\z, \qquad 
     z \in \R^{N+1}.
 \end{equation}

 We also remark that the potential $\G (D_{m_0} f): \R^{N+1} \longrightarrow \R^{m_0}$ is 
 well-defined for any $f \in L^p (\R^{N+1})$, at least in the distributional sense, that is 
 \begin{equation}
     \label{pot}
     \G (D_{m_0} f ) (z) := - \int_{\R^{N+1}} D^{(\x)}_{m_0} \G (z, \x) \,
     f(\x) \, d\x,
 \end{equation}
 where $D^{(\x)}_{m_0} \G (x,t, \x, \t)$ is the gradient with respect to $\x_1, 
 \ldots, \x_{m_0}$. Based on \eqref{estg}, in \cite{CPP} are proved potential estimates for non-dilation invariant operators.
 \begin{theorem}
    \label{corollary}
    Let $f \in L^p (\Q_r)$. There exists a positive constant $c = c(T,B)$ 
    such that
    \begin{align}
        \parallel \G (f) \parallel_{L^{p**}(\Q_r)} &\le c \parallel f 
        \parallel_{L^{p}(\Q_r)}, \label{stima1} \\ 
        \parallel \G (D_{m_0} f) \parallel_{L^{p*}(\Q_r)} &\le c 
        \parallel f \parallel_{L^{p}(\Q_r)}, \label{stima2}
    \end{align}
where $\frac{1}{p*}= \frac{1}{p} - \frac{1}{Q+2}$ and $\frac{1}{p**}= 
\frac{1}{p} - \frac{2}{Q+2}$.
\end{theorem}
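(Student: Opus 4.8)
\textbf{Proof proposal for Theorem \ref{corollary} (potential estimates on cylinders).}

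The plan is to deduce the two estimates from the global $L^p$ bound for the homogeneous Newtonian potential (the Proposition on $G_f$ stated above, applied with $\G_0$ and suitable homogeneity degrees) together with the two-sided comparison \eqref{estg} between $\G$ and $\G_0$. First I would reduce the cylinder estimate to a global estimate by localization: for $f \in L^p(\Q_r)$ extend $f$ by zero to all of $\R^{N+1}$, call the extension $\tilde f$, and note that since $\Q_r$ has finite measure and $\tilde f$ is supported in $\Q_r$, the potentials $\G(\tilde f)$ and $\G(D_{m_0}\tilde f)$ agree on $\Q_r$ with the quantities we must estimate, and $\|\tilde f\|_{L^p(\R^{N+1})} = \|f\|_{L^p(\Q_r)}$. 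So it suffices to prove $\|\G(g)\|_{L^{p**}(\R^{N+1})} \le c\|g\|_{L^p(\R^{N+1})}$ and $\|\G(D_{m_0} g)\|_{L^{p*}(\R^{N+1})} \le c\|g\|_{L^p(\R^{N+1})}$ for $g$ supported in $\Q_r$. Actually one wants the output norm only on $\Q_r$, which is weaker, so any global bound will do.

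Next I would handle the comparison with the homogeneous operator. The kernel $\G(z,\z) = \G(\z^{-1}\circ z, 0)$ is, by \eqref{estg}, bounded above by $c\,\G_0(\z^{-1}\circ z,0)$ on the region where $\G_0 \ge M$, i.e. where $\|\z^{-1}\circ z\|$ is small; on the complementary region $\{\G_0 < M\}$, one must check separately that $\G$ is bounded by an integrable homogeneous-type tail. Here it is convenient to split $\G = \G\mathbf 1_{\{\|\z^{-1}\circ z\|\le 1\}} + \G\mathbf 1_{\{\|\z^{-1}\circ z\|>1\}}$: on the first piece \eqref{estg} gives pointwise domination by $c\,\G_0$, and since $\G_0(\cdot,0)$ is $\d_r$-homogeneous of degree $-Q$, i.e. of degree $\a - Q - 2$ with $\a = 2$, the Proposition on $G_f$ applies with exponent relation $\frac1{p**} = \frac1p - \frac{2}{Q+2}$, giving exactly \eqref{stima1}. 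For $\G(D_{m_0}f)$, the kernel is $D^{(\x)}_{m_0}\G$, which scales one degree worse, i.e. is $\d_r$-homogeneous of degree $-Q-1 = \a - Q - 2$ with $\a = 1$, and the Proposition then yields $\frac1{p*} = \frac1p - \frac1{Q+2}$, which is \eqref{stima2}. For the far-field piece $\{\|\z^{-1}\circ z\|>1\}$, restricted to $z,\z \in \Q_r$ (or with $\z$ in $\Q_r$ and $z$ in $\Q_r$) the kernel is bounded — indeed $\G(w,0) \le C(T,B)$ for $\|w\|\ge 1$ with $w$ in a bounded set, because $C(t)$ is strictly positive and $t$ ranges in a compact set away from $0$ is not guaranteed, so more care is needed here — and Young's inequality with the bounded kernel on the bounded set $\Q_r$ absorbs this piece into a constant times $\|f\|_{L^1(\Q_r)} \le c\|f\|_{L^p(\Q_r)}$, the last step using finiteness of $|\Q_r|$ and $p>1$. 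The same reasoning applies to the derivative kernel on the far field. This is essentially the argument of \cite{CPP}.

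The main obstacle I expect is the control of the kernels near the diagonal region where $t = \t$ but $x \ne \x$: there $\G_0(\z^{-1}\circ z,0)$ can be large (it behaves like a Gaussian in the rescaled variable), so one must verify that the pointwise bound \eqref{estg} genuinely covers the set $\{\G_0 \ge M\}$ and that the leftover set $\{\G_0 < M\}\cap\{\|\z^{-1}\circ z\|\le 1\}$ contributes a harmless, uniformly bounded kernel — this requires knowing that on $\Q_r$ the singularity of $\G$ is quantitatively no worse than that of $\G_0$, which is precisely the content of Theorem 3.1 of \cite{LP} invoked in \eqref{estg}, but one must be attentive to the role of the parameter $M$ and the dependence of constants on $T$ (the time-extent of $\Q_r$) and $B$. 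A secondary technical point is that $\G_0$ and its derivatives, while homogeneous, are only continuous away from the origin and one needs $\max_{\|z\|=1}|\G_0(z)| < \infty$ and $\max_{\|z\|=1}|D_{m_0}\G_0(z)| < \infty$, which follow from the explicit Gaussian form \eqref{eq-Gamma0-b}; and that the extension-by-zero preserves the $L^p$ norm and the support, which is immediate. Once these points are in place, the two displayed inequalities follow by combining the near-diagonal homogeneous estimate with the bounded far-field estimate.
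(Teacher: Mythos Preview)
Your proposal is correct and follows the same approach as the paper, which does not actually prove this theorem but merely cites \cite{CPP} for it; the argument in \cite{CPP} is precisely the one you outline --- pointwise domination of $\G$ by the homogeneous kernel $\G_0$ via \eqref{estg} near the singularity, application of the homogeneous potential Proposition with $\alpha=2$ for $\G_0$ and $\alpha=1$ for $D_{m_0}\G_0$, and a trivial bounded-kernel estimate on the complementary region. Your identification of the delicate point (handling the set $\{\G_0<M\}$ and tracking the dependence of constants on $T$ and $B$) is exactly where the work in \cite{CPP} goes.
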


We can use the fundamental solution $\G$ as a test function in the definition of sub and 
super-solution. The following result extends Lemma 2.5 in \cite{PP} and Lemma 3 in \cite{CPP}.
\begin{lemma}
    \label{lemma2.5}
    Let $v$ be a non-negative weak sub-solution to $\L u = 0$  
    in $\O$. For 
    every $\phi \in C^{\infty}_0 (\O)$, $\phi \ge 0$, and for almost every $z 
    \in \R^{N+1}$, we have 
    \begin{align*}
        \int_{\O} - \langle A Dv, D (\G (z, \cdot) \phi ) \rangle &+
        \G (z, \cdot) \phi Yv + \\
        &- \langle a, D( \G(z, \cdot) \phi) \rangle v - \langle b, D (\G (z, \cdot) \phi) \rangle v + c u \G(z, \cdot) \phi 
        \ge 0.
    \end{align*}
    An analogous result holds for weak super-solutions to $\L u = 0$.
\end{lemma}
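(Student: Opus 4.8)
The plan is to justify inserting $\G(z,\cdot)\phi$ as a test function in \eqref{subsol}. The obstruction is the singularity of $\G(z,\cdot)$ at its pole $\zeta=z$: there one has $\G(z,\cdot)\sim d(z,\cdot)^{-Q}$ and $D^{(\zeta)}_{m_0}\G(z,\cdot)\sim d(z,\cdot)^{-Q-1}$, so $\G(z,\cdot)\phi\notin L^2_\loc$ when $Q\ge2$ and it cannot be paired directly with a function whose only controlled derivatives, $D_{m_0}v$ and $Yv$, lie in $L^2_\loc$. I would therefore regularize $\G$ and pass to the limit by means of the potential estimates of Theorem \ref{corollary}. (For $Q\le2$ the kernel is square--integrable near the pole and the limit below is elementary, so we may assume $Q>2$.)

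\medskip

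\noindent\emph{Step 1.} For $\delta>0$ let $\G_\delta(z,\cdot)$ be the convolution, in the second variable, of $\G(z,\cdot)$ with a standard even mollifier $\rho_\delta$ on $\R^{N+1}$. Then $\G_\delta(z,\cdot)\in C^\infty(\R^{N+1})$ is non-negative, hence $\G_\delta(z,\cdot)\phi\in C^\infty_0(\O)$, $\G_\delta(z,\cdot)\phi\ge0$, is an admissible test function in \eqref{subsol}. Replacing $\phi$ by $\G_\delta(z,\cdot)\phi$ there and rewriting the first-order terms in divergence-tested form — for the $b$-term one uses the distributional identity $\langle b,Dv\rangle\psi=\langle b,D(v\psi)\rangle-\langle b,D\psi\rangle v$, whence $\int_\O\langle b,Dv\rangle\psi=-\langle\div b,v\psi\rangle-\int_\O\langle b,D\psi\rangle v\le-\int_\O\langle b,D\psi\rangle v$ because $\div b\ge0$ and $v,\psi\ge0$, and the $a$-term is handled in the same spirit, invoking $\div a\ge0$ as in \cite{NU} — one obtains, for every $z\in\R^{N+1}$ and every $\delta>0$,
\begin{equation*}
\int_\O-\langle ADv,D(\G_\delta(z,\cdot)\phi)\rangle+\G_\delta(z,\cdot)\phi\,Yv-\langle a,D(\G_\delta(z,\cdot)\phi)\rangle v-\langle b,D(\G_\delta(z,\cdot)\phi)\rangle v+cv\,\G_\delta(z,\cdot)\phi\ \ge\ 0 .
\end{equation*}

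\medskip

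\noindent\emph{Step 2.} It remains to let $\delta\to0$. Writing $D(\G_\delta(z,\cdot)\phi)=\phi\,D^{(\zeta)}\G_\delta(z,\cdot)+\G_\delta(z,\cdot)D\phi$ and using Fubini's theorem, each term of the last display is a $\G$-potential, in the sense of \eqref{L0p}--\eqref{pot}, evaluated at $z$: for $g\in L^p(\O)$ supported in a fixed cylinder $\Q_r$ with $\overline{\Q_r}\subset\O$ one has
\begin{equation*}
\int_\O\G_\delta(z,\zeta)\,g(\zeta)\,d\zeta=\G(g*\rho_\delta)(z),\qquad-\int_\O D^{(\zeta)}_{m_0}\G_\delta(z,\zeta)\,g(\zeta)\,d\zeta=\G\big(D_{m_0}(g*\rho_\delta)\big)(z),
\end{equation*}
where $g$ runs over the entries of $AD_{m_0}v\,\phi$ and over $\phi\,Yv$ — which lie in $L^2$ since $a_{ij}\in L^\infty$ and $D_{m_0}v,Yv\in L^2_\loc$ — and over $a_iv\phi$, $b_iv\phi$, $cv\phi$ — which lie in $L^r$ with $\tfrac1r=\tfrac1q+\tfrac12$, by \textbf{(H3)} and $v\in L^2_\loc$. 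Since $Q>2$ forces $Q+2>4$, assumption \textbf{(H3)} gives $q>\tfrac34(Q+2)>2$, hence $r\in\,]1,Q+2[$ and all the exponents $p*,p**$ of Theorem \ref{corollary} are admissible for these choices. By \eqref{stima1}--\eqref{stima2}, $\G(g*\rho_\delta)\to\G(g)$ in $L^{p**}(\Q_r)$ and $\G(D_{m_0}(g*\rho_\delta))\to\G(D_{m_0}g)$ in $L^{p*}(\Q_r)$ as $\delta\to0$; so, along a common subsequence $\delta_k\downarrow0$, each of the finitely many terms converges for a.e.\ $z$. Passing to the limit in the inequality of Step~1 yields the assertion for a.e.\ $z\in\R^{N+1}$. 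The case of super-solutions follows by applying this to $-v$.

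\medskip

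\noindent\emph{Expected main obstacle.} The delicate step is the passage to the limit in Step~2: the pairings against $D^{(\zeta)}_{m_0}\G_\delta(z,\cdot)\sim d(z,\cdot)^{-Q-1}$ are not controlled by H\"older's inequality and would diverge term by term; convergence is recovered only through the Calder\'on--Zygmund-type smoothing of the kernel embodied in \eqref{stima2}, and it is precisely this that dictates the lower bound on $q$ in \textbf{(H3)} and the ``for almost every $z$'' in the statement. A lesser technical point is the rigorous meaning of $\langle\div b,v\,\G_\delta(z,\cdot)\phi\rangle$ when $v$ is merely $L^2_\loc$, which is handled by mollifying $v$ as well and exploiting $\div a,\div b\ge0$.
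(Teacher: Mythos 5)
Your proof is correct and follows the same overall strategy as the paper's: regularize the singular test function $\G(z,\cdot)\phi$, insert it into \eqref{subsol}, control the drift terms through the sign conditions $\div a,\div b\ge 0$, and pass to the limit for almost every $z$ by means of the potential estimates of Theorem \ref{corollary}. The one genuine difference is the regularization device. The paper excises the pole, testing with $\G(z,\cdot)\phi\,\psi_\e(z,\cdot)$ where $\psi_\e$ vanishes on an $\e$-neighbourhood of $z$; this produces the extra term $I_{3,\e}$ containing $D\psi_\e$, whose vanishing as $\e\to0^+$ must be checked separately via the behaviour of $\G$ near the pole (the argument borrowed from \cite{PP}), while the remaining terms converge by dominated convergence against the $\e$-independent majorants. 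You instead mollify the kernel, $\G_\d(z,\cdot)=\G(z,\cdot)*\rho_\d$, so no derivative of a cutoff ever appears and the entire limit is absorbed into the $L^p\to L^{p*}$ and $L^p\to L^{p**}$ continuity of the potential operators combined with $g*\rho_\d\to g$ in $L^p$; the price is the extraction of a common subsequence along which the finitely many potentials converge pointwise a.e., which you do, and the verification (via Fubini and the evenness of $\rho_\d$) that the mollified pairings really are potentials of $g*\rho_\d$, which you state correctly. Both arguments share the same residual technical point, namely the meaning of the pairing of the distributions $\div a$, $\div b$ with the non-smooth function $v\,\G(z,\cdot)\phi$; the paper passes over this silently, whereas you at least flag it and indicate the standard fix of mollifying $v$ as well. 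Your remark that the case $Q\le 2$ is elementary is inessential (under \textbf{(H2)} with $m_0<N$ one always has $Q\ge 4$), but harmless.
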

\begin{proof}
    We define the cut-off function $\cc_{\r,r} \in C^\infty (\R^+)$
    \begin{equation}
        \label{chi}
        \cc_{\r,r} (s) = \begin{cases}
            0 \hspace{6mm} & \text{if} \hspace{2mm} s \ge r, \\
            1 \hspace{6mm} & \text{if} \hspace{2mm} 0 \le s < \r,
        \end{cases}
        \hspace{15mm}
        |\cc_{r,\r}'| \le \frac{2}{r-\r}
    \end{equation}
    with $\frac{1}{2} \le \r < r \le 1 $. Moreover, for every $\e < 0$ we 
    define
    \begin{equation}
        \psi_\e (x,t) = 1 - \cc_{\e,2\e} (\parallel 
        (x,t) \parallel).
    \end{equation}
    Because $v$ is a weak sub-solution, then by \eqref{subsol} for every $\e >
    0$ and $z \in \R^{N+1}$ we have
    \begin{align*}
        0 &\le \int_{\O} - \left[ \langle \hspace{1mm} A Dv, D ( \G (z,\cdot) \phi(\z) \psi_\e(z, \cdot)) 
        		\, \rangle \, + \, \G (z, \cdot) \phi(\z) \psi_\e(z, \cdot) \, Y v \right] d \z \\ 
        & \qquad + \, \int_\O \left[ \langle b, Dv \rangle \, \G (z, \cdot) \phi(\z) \psi_\e(z, \cdot) \, + \langle a, D( \G(z, \cdot) \phi(\z) 
        		\psi_\e(z, \cdot)) \rangle v + c u  \,\G(z, \cdot) \phi(\z) \psi_\e(z, \cdot) \right] d \z \\
        &= \, - \, I_{1,\e} (z) \, + \, I_{2, \e}  (z) \, - \, I_{3, \e} (z) \, + \, I_{4, \e}  (z) + I_{5, \e} (z)
    \end{align*}
    where
    \begin{align*}
        I_{1,\e} (z) &= \int_\O \langle \hspace{1mm} A Dv, D  \G (z, \cdot) 
        \hspace{1mm} \rangle \phi(\z) \psi_\e(z, \z) \hspace{1mm} d\z \\
        I_{2,\e} (z) &= \int_\O \G (z, \cdot) \phi(\z) \hspace{1mm} \left( 
        \hspace{1mm} - \langle 
        \hspace{1mm} A Dv, D\phi(\z) \hspace{1mm} \rangle + \phi(\z) Yv 
        \hspace{1mm} \right) \hspace{1mm} d\z  \\
        I_{3,\e} (z) &= \int_\O \langle \hspace{1mm} A Dv, D \psi_\e(z, \cdot) 
        \hspace{1mm} \rangle \phi(\z) \G (z,\cdot) \hspace{1mm} d\z \\
        I_{4,\e} (z) &= \int_\O \langle  b, Dv  \rangle \, \G (z, \cdot) \phi(\z) \psi_\e(z, \cdot) \, d\z  
        				+ \int_\O \langle a, D( \G(z, \cdot) \phi) \rangle v \, d \z \\
	I_{5,\e} (z) &= \int_\O c u \, \G(z, \cdot) \phi(\z) \psi_\e(z, \cdot) \, \, d \z
    \end{align*}
    Keeping in mind Theorem \ref{corollary}, it is clear that the integral 
    which defines $I_{i,\e}(z), \hspace{2mm} i = 1, 2,3$ is a potential and it 
    is convergent for almost every $z \in \R^{N+1}$. Thus, by a similar 
    argument to the one used in \cite{PP} to prove Lemma 2.5 (pg. $403-404$), 
    we get that for almost every $z \in \R^{N+1}$
    \begin{align*}
        \lim \limits_{\e \rightarrow 0^+} I_{1,\e} (z) &= \int_{\O} \langle A 
        Dv, D(\G (z, \cdot)) \rangle \phi(\z) \hspace{1mm} d\z    \\
        \lim \limits_{\e \rightarrow 0^+} I_{2,\e} (z) &= \int_{\O} \G (z, 
        \cdot) \left( - \hspace{1mm} \langle A Dv, D \phi(\z) \rangle + 
        \phi(\z) Yv \right) \hspace{1mm} d\z \\
         \lim \limits_{\e \rightarrow 0^+} I_{3,\e} (z) &= 0 .
    \end{align*}
   Let us consider the term $I_{4, \e}$. We integrate by 
   parts and we consider assumption \textbf{(H3)}:
    \begin{align*}
        I_{4,\e} = &- \int_\O \div b \, \G (z, \cdot) \phi(\z) \cc_\e(z, \cdot)  v \, d \z \,
        			- \int_\O \langle \, b, D  \left( \G (z,\cdot) \phi(\z) \cc_\e(z, \cdot)   \right) \, \rangle \, v \, d \z \, \\
			&- \int_\O \div a \, \G (z, \cdot) \phi(\z) \cc_\e(z, \cdot)  v \, d \z \,
        			- \int_\O \langle \, a, D  \left( \G (z,\cdot) \phi(\z) \cc_\e(z, \cdot)   \right) \, \rangle \, v \, d \z \, \\
        \le &- \int_\O \langle \, b, D \left( \G (z,\cdot) \phi(\z)  \cc_\e(z, \cdot)  \right) \, \rangle \, v \, d \z \,
        			- \int_\O \langle \, a, D  \left( \G (z,\cdot) \phi(\z) \cc_\e(z, \cdot)   \right) \, \rangle \, v \, d \z
    \end{align*}
    We are left with the estimate of a potential and in order to do so we would
    like to use Theorem \ref{corollary}. Because $a_i, b_i \in L^q_\loc (\O)$, with $i = 1, \ldots, m_0$ and $v
    \in L^2_\loc(\O)$, we have that 
    \begin{equation*}
        |a| \, | \G (z, \cdot) | \, | \phi | \, |D_{m_0} v| \, , \,  \,|b| \, |  \G (z, \cdot) | \, | \phi | \, 
        |D_{m_0} v|  \in L^{2\a}_\loc(\O) 
    \end{equation*}
    where $\a$ is defined as in \eqref{esponenti}. This yields, for every $\e > 0$
    \begin{align*}
    	|\langle \hspace{1mm} a, D \left( \G (z,\cdot) \phi(\z) \cc_\e(z, 
        \cdot) \right) \hspace{1mm} \rangle \hspace{1mm} v | \le |\langle 
        \hspace{1mm} a, D \left( \G (z,\cdot) \phi(\z)  \right) \hspace{1mm} 
        \rangle \hspace{1mm} v | \in L^1_\loc(\O),\\
        |\langle \hspace{1mm} b, D \left( \G (z,\cdot) \phi(\z) \cc_\e(z, 
        \cdot) \right) \hspace{1mm} \rangle \hspace{1mm} v | \le |\langle 
        \hspace{1mm} b, D \left( \G (z,\cdot) \phi(\z)  \right) \hspace{1mm} 
        \rangle \hspace{1mm} v | \in L^1_\loc(\O).
    \end{align*}
    Thus, by the Lebesgue convergence theorem, we get for a.e. $z \in \R^{N+1}$
    \begin{align*}
         \lim \limits_{\e \rightarrow 0^+} \left[ \int_\O - \langle \hspace{1mm} b, D 
         \left( \G (z,\cdot) \phi(\z) \cc_\e(z, \z)   \right) \hspace{1mm} 
         \rangle \hspace{1mm} v \, d\z \, - \, \int_\O \langle \, a, D  \left( \G (z,\cdot) \phi(\z) \cc_\e(z, \cdot)   \right) \, \rangle \, v
         \right] \, d \z  = \\
         = - \int_\O \langle \hspace{1mm} b, D \left( 
         \G (z,\cdot) \phi(\z) \right) \hspace{1mm} \rangle \hspace{1mm} v  -  \int_\O \langle \, a, D  \left( \G (z,\cdot) \phi(\z) 
           \right) \, \rangle \, v  \, d \z.
    \end{align*}
    Now, we are left with an estimate of the term $I_{5,\e}$, which is a $\G-$potential such that
    \begin{equation*}
        |c| \, |\G (z, \cdot) | \, | \phi | \, |v|  \in L^{2\a}_\loc(\O) .
    \end{equation*}    
    Thus, we have that 
    \begin{align*}
	| c u \, \G(z, \cdot) \phi(\z) \psi_\e(z, \cdot) | \le |	c u \, \G(z, \cdot) \phi(\z) | \in L^1_\loc (\O).
    \end{align*}
    Then we can apply the Lebesgue convergence theorem and we get for a. e. $z \in \R^{N+1}$
    \begin{equation*}
    	 \lim \limits_{\e \rightarrow 0^+} \int_\O c v \G (z,\cdot) \phi(\z) \cc_\e(z, \z) \, d\z \ = \int_\O c v \G (z,\cdot) \phi(\z)  \, d\z .
    \end{equation*}
\end{proof}

\section{Sobolev and Caccioppoli Inequalities}
\label{secsobolev}
In this Section we give proof of a Sobolev inequality and a Caccioppoli inequality for weak solutions to $\L u = 0$. 
We start considering the Sobolev inequality and we remark that it holds true for every $q > \frac{Q+2}{2}$.
\begin{theorem}[Sobolev Type Inequality for sub-solutions]  \label{sobolev}
    Let \textbf{(H1)}-\textbf{(H2)} hold. Let \\ $a_1, \ldots, a_{m_0}, b_1, \ldots, b_{m_0}, c \in L^q_\loc (\O)$,
    for some $q > (Q+2)/2$, and $\div a, \div b \ge 0$ in $\O$. Let $v$ be a non-negative weak 
    sub-solution of $\L u = 0$ in $\Q_1$. Then there exists a constant $C = C(Q,\l) > 0$ such that $v \in L^{2\a}_{\loc} (\Q_1)$, and the following statement holds
    \begin{align*}
        \parallel v \parallel_{L^{2 \a}(\Q_\r(z_0))} \le &C \cdot \left(  \parallel a  \parallel_{L^{q}(\Q_r(z_0))} + 
        \parallel b \parallel_{L^{q}(\Q_r(z_0))} + 1 + \frac{1}{r - \r} \right) \parallel D v  \parallel_{L^{2}(\Q_r(z_0))} + \\
        &+ C \cdot \left( \parallel c \parallel_{L^{q}(\Q_r(z_0))} + \frac{\r + 1}{\r(r - \r)} \right)
        \parallel  v \parallel_{L^{2}(\Q_r(z_0))} 
    \end{align*}
    for every $\r, r$ with $\frac{1}{2} \le \r < r \le 1$ and for every $z_0 \in \O$, where $\a =\a(q)$ is defined in \eqref{esponenti}.
\end{theorem}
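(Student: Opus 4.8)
The plan is to obtain a pointwise (a.e.) bound for $v$ multiplied by a cut-off function in terms of $\G$-potentials, and then to convert it into an $L^{2\a}$ estimate by means of the potential estimates of Theorem \ref{corollary}. By the translation invariance of $\G$ (we have $\G(z,\z)=\G(\z^{-1}\circ z,0)$) and of the Lebesgue measure we may take $z_0=0$, so that $\Q_\r \subset \subset \Q_r \subset \subset \Q_1$. Fix the cut-off $\psi:=\cc_{\r,r}(\|\cdot\|)$, with $\cc_{\r,r}$ as in \eqref{chi}: thus $\psi\equiv 1$ on $\Q_\r$, $\psi=0$ outside $\Q_r$ and $0\le\psi\le1$. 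Since $\|\cdot\|$ is $\d_r$-homogeneous of degree $1$, while $D_{m_0}$ lowers the $\d_r$-homogeneity by $1$ and the principal part of $Y$ by $2$, on the annulus $\{\r\le\|z\|\le r\}\subset\{\tfrac12\le\|z\|\le1\}$ one gets $|D_{m_0}\psi|\le \tfrac{C}{r-\r}$ and $|Y\psi|\le \tfrac{C}{\r(r-\r)}$ with $C$ universal, the factor $\r^{-1}$ coming from evaluating the $(-1)$-homogeneous function $Y_0\|\cdot\|$ on the annulus.

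The key point is the representation of $v\psi$. Starting from the basic identity $g=\int_{\R^{N+1}}\G(\cdot,\z)\,\K g(\z)\,d\z$, valid for compactly supported $g$ (cf. \eqref{eq4}), applied to $g=v\psi$, and computing $\K(v\psi)$ through the Leibniz rule
\begin{equation*}
	\K(v\psi) \;=\; \psi\,\K v \;+\; v\,\K\psi \;+\; 2\langle A_0 D_{m_0}v, D_{m_0}\psi \rangle ,
\end{equation*}
we replace $\psi\,\K v=\psi(\K-\L)v+\psi\,\L v$ using \eqref{eq5}, we bound the contribution of $\psi\,\L v$ on the favourable side via the sub-solution inequality \eqref{subsol} in the robust form of Lemma \ref{lemma2.5} (which already handles the singularity of $\G(z,\cdot)$), and we use $\div a,\div b\ge0$ to discard the distributional terms $\int \G(z,\cdot)\,\psi\,v\,\div a$ and $\int \G(z,\cdot)\,\psi\,v\,\div b$, which are non-negative. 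After one further integration by parts, moving the second derivatives of $\psi$ off of $v$, we arrive, for a.e. $z$, at
\begin{equation*}
	v(z)\,\psi(z) \;\le\; \Big| \int \langle (A_0-A)D_{m_0}(v\psi),\, D^{(\z)}_{m_0}\G \rangle \Big| \;+\; \int \G\,\psi\,\big(|a|+|b|\big)\,|D_{m_0}v| \;+\; \int \G\,\psi\,|c|\,|v| \;+\; E(z),
\end{equation*}
where $E(z)$ collects the cut-off error terms, each of type $\int \G\,|D_{m_0}v|\,|D_{m_0}\psi|$, $\int \G\,|v|\,|Y\psi|$ or $\int |v|\,|D_{m_0}\psi|\,|D^{(\z)}_{m_0}\G|$, and thus carries a factor $\tfrac1{r-\r}$ (times harmless powers of $\r^{-1}$).

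Finally, each term is estimated through Theorem \ref{corollary} and H\"older's inequality. With $\tfrac1p=\tfrac12+\tfrac1q$, H\"older gives $\big\||a|\,\psi\,|D_{m_0}v|\big\|_{L^p(\Q_r)}\le \|a\|_{L^q(\Q_r)}\|D_{m_0}v\|_{L^2(\Q_r)}$, and \eqref{stima1} bounds the corresponding $\G$-potential in $L^{p**}$, where $\tfrac1{p**}=\tfrac12+\tfrac1q-\tfrac2{Q+2}=\tfrac1{2\a}$ by the definition of $\a$ in \eqref{esponenti}; the $b$- and $c$-terms are identical. The elliptic term is controlled by \eqref{stima2}: since $|A_0-A|\le C(\l)$ and $\|D_{m_0}(v\psi)\|_{L^2}\le \|D_{m_0}v\|_{L^2(\Q_r)}+\tfrac{C}{r-\r}\|v\|_{L^2(\Q_r)}$, it lies in $L^{2*}$ with $\tfrac1{2*}=\tfrac12-\tfrac1{Q+2}$, which embeds into $L^{2\a}(\Q_\r)$ on the bounded cylinder (as $2*\ge2\a$ in the range $q\le Q+2$ of interest). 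The terms in $E(z)$ are treated the same way, landing in $L^{2**}$ or $L^{2*}$ with an extra factor $\tfrac1{r-\r}$ (note $2**>2\a$ for every admissible $q$). Collecting all the contributions gives the stated inequality with $C=C(Q,\l)$; since its right-hand side is finite under the standing hypotheses, the bound in particular yields $v\in L^{2\a}_{\loc}(\Q_1)$.

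The main obstacle is the sign bookkeeping in the representation step: one must combine the fundamental-solution identity with \eqref{subsol}/Lemma \ref{lemma2.5} so that $v\psi$ is bounded \emph{from above} by the $\G$-potentials, and this is exactly where the hypotheses $\div a,\div b\ge0$ are used; at the same time one must organize the integrations by parts so that every cut-off error term keeps only the first power of $(r-\r)^{-1}$, which is what makes the inequality usable in the subsequent bootstrap and Moser iteration. A minor technical issue, the convergence of the $\G$-potentials for merely $L^2_{\loc}$ sub-solutions, is already settled by Lemma \ref{lemma2.5} and Theorem \ref{corollary}.
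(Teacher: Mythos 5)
Your argument is correct and follows essentially the same route as the paper: the representation of $v\psi$ through the fundamental solution of $\K$, the use of Lemma \ref{lemma2.5} (where $\div a,\div b\ge 0$ enters) to discard the sub-solution term with the favourable sign, and the estimate of the remaining $\G$-potentials via Theorem \ref{corollary} combined with H\"older's inequality, with the correct identification $\tfrac1{p^{**}}=\tfrac12+\tfrac1q-\tfrac2{Q+2}=\tfrac1{2\a}$. The paper merely packages the same computation as a direct splitting $v\psi=I_0+I_1+I_2+I_3$ with $I_3\le 0$, rather than via the Leibniz expansion of $\K(v\psi)$ that you describe.
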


\begin{proof}
    Let $v$ be a non-negative weak sub-solution to $\L u = 0$. We represent $v$ in terms of the fundamental 
    solution $\G$. To this end, we consider the cut-off function $\cc_{\r,r}$ defined in \eqref{chi}
    for $\frac{1}{2} \le \r < r \le 1$. Then we consider the following test function
    \begin{equation}
        \label{cutoff}
        \psi (x,t) = \cc_{\r,r} ( \parallel (x,t) \parallel )
    \end{equation}
    and the following estimates hold true
    \begin{equation}
        \label{ineq}
        |Y \psi| \le \frac{c_0}{\r (r-\r)},
        \hspace{8mm}
        |\p_{x_j} \psi| \le \frac{c_1}{r-\r}
        \hspace{2mm} \text{for } j = 1, \ldots, m_0
    \end{equation}
    where $c_0$, $c_1$ are dimensional constants.
    For every $z \in \Q_\r$, we have
    \begin{align}
        \label{representation}
        v(z) &= v \psi(z) \\ \nonumber
          &= \int_{\Q_r} \left[ \langle A_0 D(v \psi), D\G (z, \cdot) \rangle - \G (z, \cdot) Y(v \psi) \right] (\z) d(\z) \\ \nonumber
          &=  I_0(z) + I_1(z) + I_2(z) + I_3(z)
    \end{align}
    where
    \begin{align*}
        I_0 (z) &= - \int_{\Q_r} \left[ \langle a, D ( \psi \G(z, \cdot)) \rangle  v 
                \right](\z) d \z \hspace{1mm}
                 - \int_{\Q_r} \left[ \langle b, D (\psi  \G(z, \cdot) ) \rangle v 
                \right](\z) d \z \hspace{1mm} 
                + \int_{\Q_r} \left[ c v \G (z, \cdot) \psi \right](\z) d \z \\
        I_1 (z) &= \int_{\Q_r} \left[ \langle A_0 D\psi, D \G (z, \cdot ) \rangle 
             v \right](\z) d \z \hspace{1mm} - \hspace{1mm} \int_{\Q_r} \left[ 
            \G (z, \cdot) v Y\psi \right](\z) d\z = I_1^{'} + I_1^{''} , \\
        I_2 (z) &= \int_{\Q_r} \left[ \langle (A_0 - A) Dv, D \G (z, \cdot ) 
            \rangle \psi \right](\z) d \z \hspace{1mm} - \hspace{1mm}
            \int_{\Q_r} \left[ \G (z, \cdot ) \langle A Dv, D \psi \rangle 
            \right](\z) d \z \\
        I_3 (z) &= \int_{\Q_r} \left[ \langle A Dv, D( \G (z, \cdot ) \psi)
            \rangle \right](\z) d \z \hspace{1mm} - \hspace{1mm}
            \int_{\Q_r} \left[ \left( \G (z, \cdot ) \psi \right) Yv
            \right](\z) d \z \hspace{1mm} + \\
            &\hspace{4mm} + \, \int_{\Q_r} \left[ \langle a, D (\G (z, \cdot) \psi ) 
            \rangle v \right](\z) d \z  \, + \, \int_{\Q_r} \left[ \langle b, D (\G (z, \cdot) \psi ) 
            \rangle v \right](\z) d \z -\int_{\Q_r} \left[ c v \G (z, \cdot) \psi \right](\z) d \z 
    \end{align*}
    Since $v$ is a non-negative weak sub-solution to $\L u = 0$, it follows from Lemma 
    \ref{lemma2.5} that $I_3 \le 0$, then
    \begin{equation*}
        0 \le v(z) \le I_0(z) + I_1(z) + I_2(z)  \hspace{4mm} \text{for a.e. } z \in \Q_\r.
    \end{equation*}
    To prove our claim is sufficient to estimate $v$ by a sum of $\G-$potentials.
    
    We start by estimating $I_0$. In order to do so, we recall that
    \begin{equation*}
        \langle a, Dv \rangle, \, \langle b, Dv \rangle, \, c v \,  \in L^{2 \frac{q}{q+2}} \hspace{4mm} 
        \text{for } b \in L^q, \hspace{2mm} q > \frac{Q+2}{2} \hspace{2mm}
        \text{and } Dv \in L^2.
    \end{equation*}
    Thus by Theorem \ref{corollary} we get
    \begin{equation*}
        \G * \langle a, Dv \rangle , \G * \langle b, Dv \rangle , \G * (c v ) \in L^{2\a},
    \end{equation*}
    where $\a = \a(q)$ is defined in \eqref{esponenti}.
    When $q \le (Q+2)$ we have that $\a \le 2^{**}$. Moreover, thanks to estimate
    \eqref{stima1}, we have
    \begin{align*}
        \parallel I_0 (\z) \parallel_{L^{2\a}(\Q_\r)} &\le 
        \text{meas}(\Q_\r)^{2/Q} \parallel I_0 (\z) 
        \parallel_{L^{2^{**}}(\Q_\r)} \\
        &= \text{meas}(\Q_\r)^{2/Q}
        \parallel \G * \left( \langle a,  D_{m_0}v \rangle \psi \right) + \G * \left( \langle b,  D_{m_0}v \rangle \psi \right)  + \G * \left( c v \psi \right) 
        \parallel_{L^{2^{**}}(\Q_\r)} \\
        &\le C \cdot \left( \parallel a \parallel_{L^q (\Q_\r)} + \parallel b \parallel_{L^q (\Q_\r)} \right) \parallel D_{m_0}v \parallel_{L^2 (\Q_\r)}
        + C \cdot \parallel c \parallel_{L^q (\Q_\r)} \parallel v \parallel_{L^2 (\Q_\r)}.
    \end{align*}
    We prove an estimate for the term $I_1$. $I_1'$ can be 
    estimated by \eqref{stima2} of Theorem \ref{corollary} as follows
    \begin{equation*}
        \parallel I_1 ' \parallel_{L^{2\a}(\Q_\r)} \le C
        \parallel I_1 ' \parallel_{L^{2^*}(\Q_\r)} \le C \parallel v 
        D_{m_0} \psi \parallel_{L^2 (\R^{N+1})} \le \frac{C}{r - \r} 
        \parallel v \parallel_{L^2(\Q_\r)},
    \end{equation*}
    where the last inequality follows from \eqref{ineq}. To estimate $I_1 ''$
    we use \eqref{stima1}
    \begin{align*}
        \parallel I_1 '' \parallel_{L^{2\a}(\Q_\r)} &\le C
        \parallel I_1 '' \parallel_{L^{2^*}(\Q_\r)} \le 
        \text{meas}(\Q_\r)^{2/Q}
        \parallel I_1 '' \parallel_{L^{2^{**}}(\Q_\r)} \\ 
        &\le 
        C \parallel v Y \psi \parallel_{L^2(\R^{N+1})} \le \frac{C}{\r (r-\r)} 
        \parallel v \parallel_{L^2 (\Q_\r)}.
    \end{align*}
    We can use the same technique to prove that 
    \begin{equation*}
        \parallel I_2 \parallel_{L^{2\a}(\Q_\r)} \le C \left( 1 + \frac{1}{r - \r} \right) \parallel
        D v \parallel_{L^2(\Q_\r)},
    \end{equation*}
    for some constant $C=C(Q, \l)$.  
    
    A similar argument proves the thesis when $v$ is a super-solution to $\L u = 0$. In this 
    case we introduce the following auxiliary operator
    \begin{equation}
    	\K = \div (A_0 \, D ) + \widetilde{Y}, \qquad \qquad 
	\widetilde{Y} \equiv - \langle x, B D \rangle - \p_t \, .
    \end{equation}
    Then we proceed analogously as in \cite{PP}, Section 3, proof of Theorem 3.3.
\end{proof}

\medskip

Finally, we give proof of a Caccioppoli inequality for weak solutions to $\L u = 0$.
\begin{proposition}
    \label{pcaccB1}
    Let \textbf{(H1)}-\textbf{(H3)} hold. Let $u$ be a non-negative 
    weak solution of $\L u = 0$ in $\Q_1$. Let $p \in \R$, $p \ne 0$, $p \ne 1/2$ 
    and let $r, \r$ be such that $\frac{1}{2} \le \r < r \le 1$. Then there exists
    a constant $C$ such that 
    \begin{align*}
    &\frac1\l \, \parallel D v \parallel^2_{L^2(\Q_\r)} \, \le \\
    &\le \left[ \frac{C \, p}{2 \l} \frac{1}{(r-\r)^2} \, + \, \frac{C}{r-\r} \left( 1 \, + \, \parallel a \parallel_{L^q(\Q_r)} \, + \, \parallel b 
    \parallel_{L^q(\Q_r)} \right) \, + \, \frac{p}{2} \parallel c \parallel_{L^q(\Q_r)} \right] \parallel v \parallel^2_{L^{2 \b}(\Q_r)} ,
    \end{align*}
    where $\b = \b(q)$ is defined in \eqref{esponenti}.
\end{proposition}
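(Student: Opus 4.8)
Writing $v := u^{p}$, the plan is to test the weak formulation \eqref{weak-sol2} with $\phi = \psi^{2}\,u^{2p-1}$, where $\psi(x,t) = \cc_{\r,r}(\parallel(x,t)\parallel)$ is the cut-off function of \eqref{cutoff}. Since $u^{2p-1}$ is in general neither bounded (when $2p-1<0$) nor smooth, one first carries out the computation with $u$ replaced by a Lipschitz truncation $u_{\d}:=\min\{\max\{u,\d\},\d^{-1}\}$ — admissible because $u$ is a \emph{solution}, so \eqref{weak-sol2} holds as an equality and may be tested against functions of either sign — and then passes to the limit $\d\to0$; this is routine and is done exactly as in \cite{PP} and \cite{CPP}.

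Expanding $D\phi = 2\psi\,u^{2p-1}D\psi + (2p-1)\psi^{2}u^{2p-2}Du$ and using the pointwise identities $u^{2p-2}\langle AD_{m_0}u,D_{m_0}u\rangle = p^{-2}\langle AD_{m_0}v,D_{m_0}v\rangle$, $u^{2p-1}D_{m_0}u = (2p)^{-1}D_{m_0}(v^{2})$ and $u^{2p}=v^{2}$, one rewrites the weak formulation entirely in terms of $v$. The transport term is treated via the skew-adjointness of $Y$ (which holds since $\trace B = 0$): $\int \psi^{2}u^{2p-1}Yu = (2p)^{-1}\int\psi^{2}Y(v^{2}) = -p^{-1}\int\psi\,(Y\psi)\,v^{2}$. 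Isolating the Dirichlet term produced by the second summand of $D\phi$ yields an identity of the form
\begin{equation*}
\frac{2p-1}{p^{2}}\int\psi^{2}\langle AD_{m_0}v,D_{m_0}v\rangle \,=\, -\frac{2}{p}\int\psi\,v\,\langle AD_{m_0}v,D_{m_0}\psi\rangle \,-\,\frac{1}{p}\int\psi\,(Y\psi)\,v^{2}\,+\,R_{a}\,+\,R_{b}\,+\,\int c\,\psi^{2}v^{2},
\end{equation*}
where $R_{a},R_{b}$ collect the contributions of $a$ and $b$. The key manipulation is to integrate $R_{a}$ and $R_{b}$ by parts: each generates a term proportional to $-\int\psi^{2}v^{2}\div a$, resp.\ $-\int\psi^{2}v^{2}\div b$, whose coefficient has a definite sign determined by those of $2p-1$ and $p$; when $p>\tfrac12$ this coefficient is negative, so these terms may be discarded thanks to $\div a,\div b\ge0$ from \textbf{(H3)}. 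After a small cancellation the remaining parts of $R_{a},R_{b}$ reduce to terms bounded in modulus by $\tfrac{C}{|p|}\int\psi\,|D_{m_0}\psi|\,(|a|+|b|)\,v^{2}$.

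It then remains to estimate the surviving terms. By the ellipticity of $A$ one has $|\langle AD_{m_0}v,D_{m_0}\psi\rangle|\le\l\,|D_{m_0}v|\,|D_{m_0}\psi|$, so Young's inequality splits the gradient term as $\e\int\psi^{2}|D_{m_0}v|^{2}+C_{\e,p}\int|D_{m_0}\psi|^{2}v^{2}$, with $\e$ chosen (comparable to $(2p-1)/p$) so that the first summand is absorbed by the left-hand side. For the coefficient terms, Hölder's inequality with exponents $q$ and $q'=q/(q-1)$ gives, since $2q'=2\b$ with $\b$ as in \eqref{esponenti}, $\int|c|\psi^{2}v^{2}\le\parallel c\parallel_{L^{q}(\Q_r)}\parallel v\parallel_{L^{2\b}(\Q_r)}^{2}$ and, using $|D_{m_0}\psi|\le c_{1}/(r-\r)$ from \eqref{ineq} together with $\psi\le1$ and $\mathrm{supp}\,\psi\subseteq\Q_r$, $\int\psi\,|D_{m_0}\psi|\,|a|\,v^{2}\le\tfrac{c_{1}}{r-\r}\parallel a\parallel_{L^{q}(\Q_r)}\parallel v\parallel_{L^{2\b}(\Q_r)}^{2}$, and likewise for $b$. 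Finally the two purely geometric integrals $\int|D_{m_0}\psi|^{2}v^{2}$ and $\int\psi\,|Y\psi|\,v^{2}$, being weighted $L^{1}(\Q_r)$ norms of $v^{2}$, are dominated by $\tfrac{C}{(r-\r)^{2}}\parallel v\parallel_{L^{2\b}(\Q_r)}^{2}$ — here one uses $|Y\psi|\le c_{0}/(\r(r-\r))$, $\r\ge\tfrac12$, $\meas(\Q_r)\le\meas(\Q_1)$ and one more Hölder step ($2\b>2$). Collecting all contributions, dividing by $\tfrac{2p-1}{p^{2}}$ and absorbing, one reaches the stated inequality; the displayed powers of $p$ in the constant come from tracking the factor $\tfrac{p^{2}}{2p-1}$ and the choice of $\e$.

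The main obstacle is the sign bookkeeping near $p=\tfrac12$: the coefficient $\tfrac{2p-1}{p^{2}}$ of the good Dirichlet term changes sign there, and it must be matched with the sign of the $\div a$ and $\div b$ contributions so that \textbf{(H3)} can be invoked to discard them; this is precisely what dictates the case distinction recorded in Proposition \ref{propsubsup} for sub- and super-solutions, and for a genuine solution it is handled by using that \eqref{weak-sol2} is an equality. The only other delicate point is the rigorous justification of the (generally unbounded, non-smooth) test function $\psi^{2}u^{2p-1}$ through the truncation-and-limit argument above.
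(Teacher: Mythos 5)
Your proposal follows essentially the same route as the paper: the test function $\phi=\psi^{2}u^{2p-1}$, the rewriting in terms of $v=u^{p}$, the integration by parts that turns the $a$- and $b$-terms into $\div a$, $\div b$ contributions discarded via \textbf{(H3)}, the treatment of the transport term through $Y(\psi^{2}v^{2})$, and the Hölder/Young steps producing the $L^{2\b}$ norm are all exactly the paper's argument. The only cosmetic difference is the regularization: you use a single two-sided truncation $\min\{\max\{u,\d\},\d^{-1}\}$, whereas the paper separates the two difficulties, using $u+\tfrac1n$ for $p<1$ and the linearized truncation $g_{n,p}$ for $p\ge1$, but both devices serve the same purpose and converge monotonically to the same limit.
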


\begin{proof}
    We consider the case $p<1$, $p \ne 0$, $p \ne 1/2$. First of all, we consider an
    uniformly positive weak solution $u$ to $\L u = 0$, that is $u \ge u_0$ for some 
    constant $u_0 > 0$. For every $\psi \in C^{\infty}_0(\Q_r)$ we consider the function $\phi = u^{2p-1}\psi^2$. Note that $\phi,
    D_{m_0}\phi \in L^2 (\Q_r)$, then we can use $\phi$ as a test function in 
    \eqref{weak-sol2}:
    \begin{align*}
    	0 &=  \int_{\Q_r} \left( - \langle A Du, D(u^{2p-1}\psi^2) \rangle + u^{2p-1}\psi^2 Y u + \langle a, D (u^{2p-1}\psi^2) \rangle u +
			\langle b , Du \rangle u^{2p-1}\psi^2 + c u^{2p}\psi^2 \right)
    \end{align*}
    Let $v = u^p$. Since $u$ is a weak solution to $\L u = 0$  and $u 
    \ge u_0$, then $v, D_{m_0} v, Yv \in L^2(\Q_r)$:
    \begin{align*}
      0 = &- \int_{\Q_r} \left( 1 - \frac{1}{2p} \right) \langle A Dv , Dv \rangle  \psi^2 \, - \, \int_{\Q_r} \langle A Dv, D \psi \rangle v \psi 
      				\, + \,\frac{1}{4}  \int_{\Q_r} Y( v^2) \psi^2 \\
      	&- \, \int_{\Q_r} \div a \, v^2 \psi^2 \, - \, \frac{1}{4} \int_{\Q_r} \langle a, D(v^2) \rangle \psi^2 \, + \, \frac{1}{4} \int_{\Q_r}  \langle b, D (v^2) 
	\rangle \psi^2 \, + \, \frac{p}{2} \int_{\Q_r} c v^2 \psi^2.
    \end{align*}
    Because of assumption \textbf{(H1)} and by definition \eqref{cutoff} of the cut-off function $\psi$, we get the following inequality
    \begin{align}
        \label{estimate}
        \frac1\l \, & \left( \frac{2p - 1}{2p} + \e \right) \int_{\Q_\r} |D v|^2  \, \le \\ \nonumber
        &\le \frac{1}{4 \e \l} \frac{C}{(r-\r)^2} \int_{\Q_r} |v|^2 \, 
        \boxed{\, - 	\, \int_{\Q_r} \div a \, v^2 \psi^2 \, - \, \frac{1}{4} \int_{\Q_r} \langle a, D(v^2) \rangle \psi^2 \,}_A \, + \\ \nonumber
        &\qquad + \, \boxed{ \frac{1}{4} \int_{\Q_r} \langle b, D(v^2) \rangle \psi^2  }_B \, + \, \boxed{  \frac{p}{2} \int_{\Q_r} c v^2 \psi^2  }_C \, + \, 
        \boxed{ \frac{1}{4} \int_{\Q_r} Y(v^2) \psi^2 }_D
    \end{align}
where $\e$ is a positive constant coming from the application of the Young's inequality. In the following we are going to consider exponents $\a = \a(q)$ and 
$\b = \b(q)$ defined in \eqref{esponenti}. Now we need to estimate the boxed terms. 

Let us consider the term A, by Assumption \textbf{(H3)} and a classic H\"older estimate we have that
\begin{align*}
	\boxed{\, - \, \int_{\Q_r} \div a \, v^2 \psi^2 \, - \, \frac{1}{4} \int_{\Q_r} \langle a, D(v^2) \rangle \psi^2 \,}_A
	 \,  &\le \, - \frac{3}{4} \int_{\Q_r} \div a \, v^2 \psi^2 \, + \, \frac{1}{2} \int_{\Q_r} | \langle a, D \psi \rangle | \, |\psi| \, v^2 \\
	 &\le \, \frac{C}{r-\r} \, \parallel a \parallel_{L^{q}(\Q_r)} \parallel v \parallel_{L^{2\b}(\Q_r)}^2.
\end{align*}

Let us consider the term B. Thus, by Assumption \textbf{(H3)} and a classic H\"older estimate we have that
\begin{align*}
       \boxed{ \frac{1}{4} \int_{\Q_r} \psi^2 \langle b, D(v^2) \rangle }_B &\le \, 
       - \, \frac{1}{4} \int_{\Q_r} v^2 \psi^2 \div b \, + \, \frac{1}{2} \int_{\Q_r} |\langle b, D\psi \rangle| |\psi| v^2 \\
       &\le \, \frac{C}{r - \r} \parallel b \parallel_{L^q(\Q_r)} \, \parallel v \parallel_{L^{2\b}(\Q_r)}^2.
\end{align*}

Let us consider the linear term C. We estimate it via a classical H\"older estimate:
\begin{equation*}
	\boxed{  \frac{p}{2} \int_{\Q_r} c v^2 \psi^2  }_C \le \frac{p}{2} \, \parallel c \parallel_{L^{q}(\Q_r)} \, \parallel v \parallel_{L^{2\b}(\Q_r)}^2 .
\end{equation*}

As far as it concerns the term D, we begin considering the following equality:
\begin{equation*}
    \psi^2 Y(v^2) = Y(\psi^2 v^2) - 2 v^2 \psi Y \psi.
\end{equation*}
Since by the divergence theorem $D_1 = 0$ ($v^2 \psi^2$ is null on the boundary of $\Q_r$), we get
\begin{align*}
    \boxed{ \frac{1}{4} \int_{\Q_r} Y(v^2) \psi^2 }_D = \, D_1 \, + \, D_2 \,=  \int_{\Q_r} \frac{1}{4} Y(v^2 \psi^2) \, + \, \int_{\Q_r} \frac{v^2 \psi}{2} Y \psi \,  
    \le \, \frac{C}{\r (r-\r)} \parallel v \parallel^2_{L^{2}(\Q_r)}.
\end{align*}

Thus we have
\begin{align*}
       \frac1\l \left(  \frac{2p-1}{2p} \, + \, \e \right) & \parallel D v \parallel^2_{L^2(\Q_\r)} \, \le \, \left( \frac{c}{4 \e \l} \frac{1}{(r-\r)^2} \, + \, 
       \frac{C}{\r (r-\r)} \right) \parallel v \parallel^2_{L^2(\Q_r)} \, + \\
       & + \, \frac{C}{r-\r} \left( \parallel a \parallel_{L^q(\Q_r)} \, + \, \parallel b \parallel_{L^q(\Q_r)} \right)
       \parallel v \parallel^2_{L^{2 \b}(\Q_r)}  \, + \, \frac{p}{2} \parallel c \parallel_{L^q(\Q_r)} \parallel v \parallel^2_{L^{2 \b}(\Q_r)} .
\end{align*}
By choosing $\e = \frac{1}{2p}$ and considering that $\b > 2$ we have that
\begin{align}
    \label{estimate2}
    & \frac1\l \, \parallel D v \parallel^2_{L^2(\Q_\r)} \, \le \\ \nonumber
    &\le \left[ \frac{C \, p}{2 \l} \frac{1}{(r-\r)^2} \, + \, \frac{C}{r-\r} \left( 1 \, + \, \parallel a \parallel_{L^q(\Q_r)} \, + \, \parallel b \parallel_{L^q(\Q_r)} 
    \right) \, + \, \frac{p}{2} \parallel c \parallel_{L^q(\Q_r)} \right] \parallel v \parallel^2_{L^{2 \b}(\Q_r)} .
\end{align}

\medskip
The previous argument can be adapted to the case of a non-negative weak solution 
to $\L u= 0$. Indeed, we may consider the estimate \eqref{estimate2} for the 
solution $u + \frac{1}{n}, \hspace{1mm} n \in \N$, 
\begin{align*}
       & \frac{1}{\l} \int_{\Q_\r} \left| D \left( u + \frac{1}{n} \right)^p  
       \right|^2 \, \le \\
       &\le \, \left[ \frac{C \, p}{2 \l} \frac{1}{(r-\r)^2} \, + \, \frac{C}{r-\r} \left( 1 + \parallel a \parallel_{L^q(\Q_r)} +
       \parallel b \parallel_{L^q(\Q_r)} \right) + \frac{p}{2} \parallel c \parallel_{L^q(\Q_r)} \right] 
       \left( \int_{\Q_r}  \left( u + \frac{1}{n} \right)^{2\b} \right)^{\frac{1}{\b}}.
\end{align*}
We let $n$ go to infinity. The passage to the limit in the first integral is 
allowed because
\begin{align*}
    \left| D \left( u + \frac{1}{n} \right)^p  \right| = p \left( u + \frac{1}{n} 
    \right)^{p-1} \left| D u \right| \hspace{2mm}  \nearrow \hspace{2mm}
    \left| D u^p \right| , \hspace{4mm} \forall p < 1, \hspace{1mm} n \rightarrow \infty.
\end{align*}
For the second integral we rely on the assumptions $u^p \in L^2(\Q_r)$ and $u^p \in L^{2\frac{q}{q-1}}(\Q_r)$.

Next, we consider the case $p \ge 1$. For any $n \in \N$, we define the function 
$g_{n,p}$ on $]0, +\infty [$ as follows 
\begin{equation*}
    g_{n,p}(s) = \begin{cases}
                    s^p, &\text{if} \hspace{2mm} 0 < s \le n, \\
                    n^p + p n^{p-1} (s-n), \hspace{4mm} &\text{if} \hspace{2mm} s 
                    > n,
                 \end{cases}
\end{equation*}
then we let 
\begin{equation*}
    v_{n,p} = g_{n,p}(u).
\end{equation*}
Note that 
\begin{equation*}
    g_{n,p} \in C^1( \R^+), \hspace{4mm} g_{n,p}^{'} \in L^\infty( \R^{+}).
\end{equation*}
Thus since $u$ is a weak solution to $\L u = 0$, we have
\begin{equation*}
    v_{n,p} \in L^2_\loc, \hspace{2mm} D v_{n,p} \in L^2_\loc, \hspace{2mm} 
    Y v_{n,p} \in L^2_\loc .
\end{equation*}
We also note that the function 
\begin{equation*}
    g^{''}_{n,p} (s) = \begin{cases}
                        p (p-1) s^{p-2}, \hspace{4mm} & \text{if} \hspace{2mm} 0 < s < n \\
                        0, & \text{if} \hspace{2mm} s \ge n,
                       \end{cases}
\end{equation*} 
is the weak derivative of $g_{n,p}^{'}$, then $D g_{n,p}^{'}(u) =  g^{''}_{n,p} (u) D(u)$
(for the detailed proof of this assertion, we refer to \cite{GT}, Theorem 7.8). Hence, by
considering
\begin{equation*}
    \phi = g_{n,p}(u) \hspace{1mm} g_{n,p}^{'}(u) \hspace{1mm} \psi^2, 
    \hspace{4mm} \psi \in C^\infty_0(\Q_r)
\end{equation*}
as a test function in Definition \ref{weak-sol2}, we find 
\begin{align*}
    0 &= \int \limits_{\Q_1} - \langle A Du, D \phi \rangle + \phi Yu - \div a \, u \phi - \langle a, Du \rangle \phi + \langle b, D u \rangle \phi + cu\phi
    		\\ \nonumber
      &= \int \limits_{\Q_1} - \left( g^{'}_{n,p} (u) \right)^2 \psi^2 \langle A Du, D u \rangle 
      		- g^{''}_{n,p}(u) \, g_{n,p} (u) \psi^2 \langle A Du, D u \rangle
      				- 2 \psi \langle A Du, D \psi \rangle g_{n,p} (u) \, g^{'}_{n,p} (u)  + \\ \nonumber
	   &+ \int \limits_{\Q_1} g_{n,p} (u) \, g^{'}_{n,p} (u) \psi^2 \, Yu 
	   	- \div a \, u \, g_{n,p} (u) \, g^{'}_{n,p} (u) \psi^2 - \langle a, D u \rangle \psi^2 g_{n,p} (u) \, g^{'}_{n,p} (u) + \\ \nonumber
	   &+ \int \limits_{\Q_1} \langle b, D u \rangle \psi^2 g_{n,p} (u) \, g^{'}_{n,p} (u) + c u g_{n,p} (u) \, g^{'}_{n,p} (u) \psi^2.
\end{align*}	
Since $v=g_{n,p} (u)$ we have that the following equality holds:
\begin{align*}
    0  &= \int \limits_{\Q_r} - \psi^2 \langle A Dv_{n,p}, D v_{n,p} \rangle - \boxed{ g^{''}_{n,p}(u) \, g_{n,p} (u) \psi^2 \langle A Du, D u \rangle }_{A}
      				- 2 \psi \langle A Dv_{n,p}, D \psi \rangle v_{n,p}  + \\ \nonumber
	&+ \int \limits_{\Q_r} \frac12 \psi^2 \, Y(v_{n,p}^2) + \boxed{  \div a \left( \frac12 v_{n,p}^2 \psi^2 - u \, g_{n,p} (u) \, g^{'}_{n,p} (u) 
		\psi^2 \right) - \div b \, v_{n,p}^2 \psi^2  }_{B} \\ \nonumber
	&+ \int \limits_{\Q_r} \frac12 \langle a, D(\psi^2) \rangle v_{n,p}^2  - \langle b, D (\psi^2) \rangle v_{n,p}^2 
			+ c u g_{n,p} (u) \, g^{'}_{n,p} (u) \psi^2.
\end{align*}	
Since $g^{''}_{n,p}(u) \ge 0$ we have that the boxed term A is non-negative. Moreover, by Assumption \textbf{(H3)} the boxed term B is non-positive. Thus, by considering Assumption \textbf{(H1)} and by choosing $\e = \frac{1}{2p}$ we have that
\begin{align*}
    \frac1\l \, \int \limits_{\Q_r}  |D v_{n,p}|^2  
    &\le  \frac{C \, p}{2 \l} \frac{1}{(r-\r)^2} \int \limits_{\Q_r}  |v_{n,p}|^2 + \frac12 \langle a, D(\psi^2) \rangle v_{n,p}^2  - \langle b, D (\psi^2) \rangle 
    			v_{n,p}^2 + c u g_{n,p} (u) \, g^{'}_{n,p} (u) \psi^2 
\end{align*}
Since $0 < v_{n,p} \le u^p$ and 
\begin{equation*}
	| D v_{n,p}  | \, \uparrow \, | D u^p | , \qquad {\rm as} \, n \rightarrow \infty, 
\end{equation*}	
we get from the above inequality
\begin{align*}	
	\frac1\l \, \int \limits_{\Q_r}  |D u^p|^2  
    &\le  \frac{C \, p}{2 \l} \frac{1}{(r-\r)^2} \int \limits_{\Q_r}  |u^p|^2 + \frac12 \langle a, D(\psi^2) \rangle u^{2p}  - \langle b, D (\psi^2) \rangle 
    			u^{2p} + c u^{2p} \psi^2
\end{align*}
and we conclude the proof as in the previous case. 
\end{proof}

\section{The Moser's Iteration}
\label{moser}
In this Section we use the classical Moser's iteration scheme to prove 
Theorem \ref{iteration}. We begin with some preliminary remarks.
First of all, we recall the following Lemma, whose proof can be found in \cite{CPP}, Lemma 6.
\begin{lemma}
    \label{cylinders}
    There exists a positive constant $\overline{c} \in ]0,1[$ such that
    \begin{equation}
        \label{cylindereq}
        z \circ \Q_{\overline{c} r (r-\r)} \subseteq \Q_r,
    \end{equation}
    for every $0 < \r < r \le 1$ and  $z \in \Q_\r$. 
\end{lemma}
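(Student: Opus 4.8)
The plan is to prove Lemma \ref{cylinders} by a direct computation using the explicit form of the group law \eqref{grouplaw} and the dilation group \eqref{gdil}. The statement asserts that the cylinders shrink in a controlled, translation-robust way: given $z \in \Q_\r$ and a point $\z \in \Q_{\overline c r(r-\r)}$, the product $z \circ \d_{\overline c r(r-\r)}(\z)$ — more precisely, using the notation $z \circ \Q_s = \{ z \circ w : w \in \Q_s\}$ and keeping in mind that $\Q_s$ is defined via $\d_s(\Q_1)$ — must lie in $\Q_r$. So I would first unwind the definitions: write $z = (x,t)$ with $|x| < \r \le 1$ and $|t| < \r$, and write a generic element of $z \circ \Q_{\overline c r(r-\r)}$ as $(x,t) \circ \d_{\overline c r(r-\r)}(\x,\s)$ with $|\x| < 1$, $|\s| < 1$. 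Using \eqref{grouplaw} this equals $\bigl( \d_{\overline c r(r-\r)}(\x) + E(\overline c r(r-\r) \cdot \s)\, x,\ t + (\overline c r(r-\r))^2 \s \bigr)$ — here I must be slightly careful that $\d_s$ acts componentwise with weights $s, s^3, \dots, s^{2\k+1}$ on the $x$-blocks and $s^2$ on the time variable, so the time component is indeed $t + (\overline c r (r-\r))^2 \s$.

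The key estimates are then two. For the time component: $|t + (\overline c r(r-\r))^2 \s| \le |t| + (\overline c r(r-\r))^2 < \r + \overline c^2 r^2 (r-\r)^2$. Since $r \le 1$ and $r - \r < 1$, the second term is at most $\overline c^2 (r - \r)$, and because $\r < r$ we get $\r + \overline c^2(r-\r) \le r$ once $\overline c^2 \le 1$ — actually we want $\overline c^2 (r-\r) \le r - \r$, which holds for any $\overline c \le 1$, so $|t + \cdots| < \r + (r - \r) = r$. For the spatial component, the main point is to bound $|\d_{\overline c r(r-\r)}(\x)| + |E(\overline c r (r-\r) \s)\, x - x| + |x|$. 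The first term is controlled by $\overline c r(r-\r) \cdot C_1$ for a dimensional constant $C_1$ (since each dilation weight is $\ge 1$ and $\overline c r(r-\r) \le 1$, so the lowest power dominates and $|\d_s(\x)| \le C_1 s$ for $s \le 1$, $|\x| < 1$). The second term is the delicate one: $E(\tau) = \exp(-\tau B)$, so $\|E(\tau) - \I\| \le C_2 |\tau|$ for $|\tau|$ bounded, hence $|E(\overline c r(r-\r)\s)x - x| \le C_2 \overline c r(r-\r) |x| \le C_2 \overline c r(r-\r) \cdot \r \le C_2 \overline c (r - \r)$. Collecting, the spatial norm is at most $\r + (C_1 + C_2)\overline c (r - \r) \le \r + (r-\r) = r$ provided $\overline c \le 1/(C_1 + C_2)$. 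Choosing $\overline c = \min\{1, 1/(C_1 + C_2)\} \in ]0,1[$ finishes it; one should double-check the norm used here (the homogeneous norm of Definition \ref{hom-norm}) satisfies a quasi-triangle inequality and is comparable to $|x| + |t|^{1/2}$-type quantities on the relevant scale, which is standard.

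The main obstacle is bookkeeping the interaction between the non-Euclidean group structure and the anisotropic dilations: one must verify that the error term $E(\tau) x - x$ is genuinely $O(\tau |x|)$ uniformly — which uses that $B$ in canonical form \eqref{B} is nilpotent-plus-bounded so $\exp(-\tau B)$ is a polynomial in $\tau$ with matrix coefficients, giving a clean Lipschitz-in-$\tau$ bound near $\tau = 0$ — and that the weights in $\d_s$ are all $\ge 1$ so that for $s \le 1$ the map $\d_s$ is a contraction in each coordinate with the lowest-order (linear) term dominating. Since the paper only needs existence of \emph{some} $\overline c$, no sharp constant is required, so the argument stays at the level of ``shrink $\overline c$ enough to absorb finitely many dimensional constants.'' I would therefore present the proof as: (i) parametrize $z \circ \Q_{\overline c r(r-\r)}$ via the group law; (ii) estimate the time coordinate; (iii) estimate the spatial coordinate, splitting off the $E(\tau)x - x$ error; (iv) choose $\overline c$ to close both estimates, and note this matches the reference \cite{CPP}, Lemma 6.
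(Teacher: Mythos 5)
The paper does not actually prove this lemma; it cites \cite{CPP}, Lemma 6, and your overall strategy (unwind the group law, estimate the time and space components separately, and shrink $\overline{c}$ to absorb dimensional constants) is indeed the right one and essentially the one used there. However, as written your verification of membership in $\Q_r$ contains a genuine gap: you treat $\Q_r$ as if it were the isotropic set $\{|x|<r,\ |t|<r\}$, whereas by definition $\Q_r=\d_r(\Q_1)$ is the anisotropic region $\{(x,t):\ \sum_i x_i^2\, r^{-2\a_i}<1,\ |t|<r^2\}$ with $\a_i\in\{1,3,\dots,2\k+1\}$. Concretely: (i) for the time variable you start from $|t|<\r$ and conclude $|t+s^2\s|<r$, but membership in $\Q_r$ requires $|t+s^2\s|<r^2$ (the correct input is $|t|<\r^2$, and one closes the estimate using $r^2-\r^2\ge(r-\r)\r$); (ii) for the space variable you bound the Euclidean norm of $\d_s(\x)+E(s^2\s)x$ by $r$, which says nothing about the components of weight $\a_i\ge 3$, for which the required bound is of order $r^{\a_i}$, much smaller than $r$. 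The same confusion appears in your closing remark that the homogeneous norm is comparable to $|x|+|t|^{1/2}$; that is only true in the uniformly parabolic case $m_0=N$, while here $\parallel(x,t)\parallel_1=\sum_i|x_i|^{1/\a_i}+|t|^{1/2}$.

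The argument is repairable along the lines you set up, but it must be run block by block (equivalently, apply $\d_{1/r}$ to the product and show the result lies in $\Q_1$). Writing $z=\d_\r(\hat z)$ with $\hat z\in\Q_1$, one uses $|x_i|<\r^{\a_i}$, the elementary inequality $r^{\a_i}-\r^{\a_i}\ge(r-\r)\,r^{\a_i-1}$, and the block lower-triangular structure of $B$ in \eqref{B1}: the block-$k$ component of $E(\t)x-x$ with $\t=s^2\s$ is $O\big(s^2\,\r^{2k-1}\big)$, which, for $s=\overline{c}\,r(r-\r)$, is bounded by $\overline{c}^2(r-\r)r^{2k}\le r^{2k+1}-\r^{2k+1}$ for $\overline{c}$ small; the contribution $s^{\a_i}\x_i$ of $\d_s(\x)$ is handled likewise. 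Only after these weighted, componentwise estimates does the choice of a single $\overline{c}=\overline{c}(N,B)\in\,]0,1[$ close the proof.
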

We are now in position to prove Theorem \ref{iteration}. 

\medskip

\noindent
{\sc Proof of Theorem \ref{iteration}.}
It suffices to give proof in the case $z_0=0$, $r \in ]0,1]$ and $0 < \r < r$. Combining Theorems
\ref{sobolev} and \ref{pcaccB1}, we obtain the following estimate: if $s, \d >0$ 
verify the condition 
    \begin{equation*}
    	|s - 1/2| \ge \d,
    \end{equation*}
    then, for every $\r, r$ such that $\frac{1}{2} \le \r < r \le 1$,  
    there exists a positive constant $\widetilde{C}$ such that
\begin{align}
	\label {it}
	\parallel u^s \parallel_{L^{2\a} (\Q_\r)} \, \le \, \widetilde{C} \left( s, \l, \parallel a \parallel_{L^{q}(\Q_{r})} , \parallel b \parallel_{L^{q}(\Q_{r})} , \parallel c \parallel_{L^{q}(\Q_{r})} \right)  
	\parallel u^s \parallel_{L^{2\b} (\Q_r)}
\end{align}
where
\begin{align*}
	\widetilde{C} & \left( s, \l, \parallel a \parallel_{L^{q}(\Q_{r})} , \parallel b \parallel_{L^{q}(\Q_{r})} , \parallel c \parallel_{L^{q}(\Q_{r})} \right) \, = \, 
	C(s, \l) \left( 1 + \parallel a \parallel_{L^{q}(\Q_{r})} + \parallel b \parallel_{L^{q}(\Q_{r})} \right) \parallel c \parallel_{L^{q}(\Q_{r})}^{\frac12} \, + \, \\ 
	&+ \, \frac{C(\l) \left( 1 + \parallel a \parallel_{L^{q}(\Q_{r})} + \parallel b \parallel_{L^{q}(\Q_{r})} \right)^{\frac32}}{(r - \r)^{\frac12}} \, + \, \frac{C}{(r-\r)^{\frac32}} \left( 1 + \parallel a 
	\parallel_{L^{q}(\Q_{r})} \, + \, \parallel b \parallel_{L^{q}(\Q_{r})} \right)^{\frac12} + \\
	&+ \, \frac{C(s)}{r-\r} \left( 1 + \parallel a \parallel_{L^{q}(\Q_{r})} + \parallel b \parallel_{L^{q}(\Q_{r})} + \l^{\frac12} \parallel c \parallel_{L^{q}(\Q_{r})}^{\frac12} \right)  
	\, + \, \frac{C(s)}{(r-\r)^{2}}.
\end{align*}

We remark that the previous constant  $\widetilde{C}$ can be estimated as follows
\begin{align}
	\label{ctilde}
	\widetilde{C} ( s, \l, \parallel a \parallel_{L^{q}(\Q_{r})}, \parallel b \parallel_{L^{q}(\Q_{r})} &, \parallel c \parallel_{L^{q}(\Q_{r})} ) \, \le \\ \nonumber
	&\le \, \frac{K(\l, s) \left( 1 + \parallel a \parallel_{L^{q}(\Q_{r})}^{2} + \parallel b \parallel_{L^{q}(\Q_{r})}^{2} + \parallel c \parallel_{L^{q}(\Q_{r})} \right)}{\left( \r_n - \r_{n+1}\right)^{2}}.
\end{align}
Fixed a suitable $\d >0$, we shall specify later on, and $p >0$ we iterate inequality \eqref{it} by choosing
\begin{equation*}
    \r_n = \r + \frac{1}{2^{n}} \left( r - \r \right), \qquad 
    p_n = \a^n \, \frac{p}{2 \b}, \hspace{6mm} n \in \N \cup \{ 0 \}. 
\end{equation*}

Then we set $v=u^{\frac{p}{2 \, \b}}$. If $p > 0$ is such that 
\begin{equation}
    \label{pcond}
    | p \, \a^n - \b| \ge 2 \b \d , \qquad \forall n \in \N \cup \{ 0 \},
\end{equation}
by \eqref{it} and estimate \eqref{ctilde} we obtain the following inequality for every $n \in \N \cup \{ 0 \}$
\begin{align}
   \label{it22}
   \parallel v^{\a^n} \parallel_{L^{2\a} (\Q_{\r_{n+1})}} \,
    \le \, \frac{ K(\l, p) \left( 1 + \parallel a \parallel_{L^{q}(\Q_{r})}^{2} + \parallel b \parallel_{L^{q}(\Q_{r})}^{2} + \parallel c \parallel_{L^{q}(\Q_{r})} \right)}{\left( \r_n - \r_{n+1}\right)^{2}} 
    \parallel v^{\a^n} \parallel_{L^{2\b} (\Q_{\r_n})}.
\end{align}
Since 
\begin{equation*}
    \parallel v^{\a^n} \parallel_{L^{2\a}} = \left( \parallel v \parallel_{L^{2\a^{n+1}}} \right)^{\a^n} \qquad {\rm and } \qquad 
    \parallel v^{\a^n} \parallel_{L^{2\b}} = \left( \parallel v \parallel_{L^{2\a^{n}}} \right)^{\a^n}
\end{equation*}
we can rewrite equation \eqref{it22} in the following form for every $n \in \N \cup \{ 0 \}$
\begin{align*}
   \parallel v \parallel_{L^{2\a^{n+1}} (\Q_{\r_{n+1})}} \le \left( 
   \frac{ K(\l, p) \left( 1 + \parallel a \parallel_{L^{q}(\Q_{r})}^{2} + \parallel b \parallel_{L^{q}(\Q_{r})}^{2} + \parallel c \parallel_{L^{q}(\Q_{r})} \right)}{\left( \r_n - \r_{n+1}\right)^{2}}  
   \right)^{\frac{1}{\a^n}} \parallel v \parallel_{L^{2\b \, \a^n} (\Q_{\r_n})}.
\end{align*}
Iterating this inequality, we obtain 
\begin{align*}
   \parallel v \parallel_{L^{2\a^{n+1}} (\Q_{\r_{n+1})}} \hspace{1mm} \le \prod_{j=0}^n  
   & \left( \frac{2^{2(j+1)}}{(r-\r)^{2}} \right)^{\frac{1}{\a^j}} \cdot \\
   & \cdot \left( K(\l, p) 
   \left( 1 + \parallel a \parallel_{L^{q}(\Q_{r})}^{2} + \parallel b \parallel_{L^{q}(\Q_{r})}^{2} + \parallel c \parallel_{L^{q}(\Q_{r})} \right)
   \right)^{\frac{1}{\a^j}} \parallel v \parallel_{L^{2\b} (\Q_{r})},
\end{align*}
and letting $n$ go to infinity, we get
\begin{equation*}
    \sup_{\Q_{\r}} v \, \le \, \frac{ \widetilde{K} }{ (r -\r)^{\m} } \parallel v \parallel_{L^{2\b} (\Q_r)} ,
\end{equation*}
where $\m = \frac{2 \a}{\a -1}$ and
\begin{equation*}
   \widetilde{K} = \prod_{j=0}^n  
   \left( K(\l, p) \left( 1 + \parallel a \parallel_{L^{q}(\Q_{r})}^{2} + \parallel b \parallel_{L^{q}(\Q_{r})}^{2} + \parallel c \parallel_{L^{q}(\Q_{r})} \right)   \right)^{\frac{1}{\a^j}}
\end{equation*}
is a finite constant dependent on $\d$. Thus, we have proved that 
\begin{equation}
    \label{moserp}
    \sup_{\Q_{\r}} u^p \, \le \, \left( \frac{ \widetilde{K} }{ (r - \r)^{\m} } \right)^{2 \b} \int_{\Q_r} u^p ,
\end{equation}
for every $p$ which verifies condition \eqref{pcond}. Because 
\begin{equation*}
	(Q+2) \, \le \, 2 \b \m \, < \, 9 (Q+2)
\end{equation*}
we get estimate \eqref{est-it}. We now make a suitable choice of $\d > 0$, only dependent on the homogeneous dimension $Q$, in order to show that \eqref{pcond} holds for every positive $p$. We remark that, if $p$ is a number of the form
\begin{equation*}
    p_m = \frac{\a^m(\a + 1)}{2 \b}, \qquad m \in \Z,
\end{equation*}
then \eqref{pcond} is satisfied with 
\begin{equation*}
    \d = \frac{|q- \frac{(Q+2)}{2}|}{(Q+2)^2}, \qquad \forall \, m \in \Z.
\end{equation*}
Therefore \eqref{moserp} holds for such a choice of $p$, with $\widetilde{K}$ only dependent on $Q, \l$ and $\parallel a \parallel_{L^q(\Q_r)}$, $\parallel b \parallel_{L^q(\Q_r)}$, $\parallel c \parallel_{L^q(\Q_r)}$. On the other hand, if $p$ is an arbitrary 
positive number, we consider $m \in \Z$ such that
\begin{equation}
    \label{p}
    p_m \le p < p_{m+1}.
\end{equation}
Hence, by \eqref{moserp} we have 
\begin{equation*}
	\sup_{\Q_{\r}} u \, \le \, \left( \frac{ \widetilde{K} }{ (r - \r)^{\m} } \right)^{\frac{2 \b}{p_{m}}} \left(  \int_{\Q_r} u^{p_{m}} \right)^{\frac{1}{p_{m}}} 
	\, \le \, \left( \frac{ \widetilde{K} }{ (r - \r)^{\m} } \right)^{\frac{2 \b}{p_{m}}} \left(  \int_{\Q_r} u^{p} \right)^{\frac{1}{p}} 
\end{equation*}
so that, by \eqref{p}, we obtain 
\begin{equation*}
    \sup_{\Q_{\r}} u^{p} \, \le \,  \left( \frac{ \widetilde{K} }{ (r - \r)^{\m} } \right)^{2 \a \b} \int_{\Q_r} u^{p} 
\end{equation*}
This concludes the proof of \eqref{est-it} for $p>0$.
We next consider $p<0$. In this case, assuming that $u \ge u_0$ for some positive 
constant $u_0$, estimate \eqref{est-it} can be proved as in the case $p>0$ or even 
more easealy since condition \eqref{pcond} is satisfied for every $p<0$. On the other
hand, if $u$ is a non-negative solution, it suffices to apply \eqref{est-it} to $u + \frac{1}{n}, \, n \in \N$, 
and let $n$ go to infinity, by the monotone convergence theorem.
\hfill $\square$

\medskip

As far as we are concerned with the proof of Corollary \ref{bounded}, it can be straightforwardly accomplished proceeding as in \cite[Corollary 1.4]{PP}.
Moreover, Proposition \ref{propsubsup} can be obtained by the same argument used in the proof of Theorem \ref{iteration}. For this reason, we do not give here 
the proof of these two results. 

\medskip

We close this Section recalling that Theorem \ref{iteration} also holds true in the sets 
\begin{equation}
	\Q^{-}_{r}((x_{0}, t_{0} )) := \Q_{r}((x_{0}, t_{0} )) \cap \{ t < t_{0} \},
\end{equation}
in the case of non-negative exponents $p$. This result is analogous to \cite{M1}, Theorem 3 (see also inequality ($6^{-}$) of Lemma 1 in \cite{M2}) and states that,
in some sense, every point of $\overline{\Q^{-}_{\r}(z_{0})}$ can be considered as an interior point of $\overline{\Q^{-}_{r} (z_{0})}$, when $\r < r$, even though it belongs to its topological boundary. 
\begin{proposition}
	Let $u$ be a non-negative weak sub-solution to $\L u = 0$ in $\O$. Let $z_{0} \in \O$ and $r, \r, \frac12 \le \r < r \le 1$, such that $\overline{\Q^{-}_{\r}(z_{0})} \subseteq \O$ and $p<0$.
	Then there exist positive constants $C= C(p, \l)$ and $\g=\g(p,q)$ such that
	\begin{equation}
		\sup_{\Q_{\r}^{-}(z_{0})} u^{p} \, \le \, \frac{C \left( 1 + \parallel a \parallel_{L^q (\Q_r(z_{0}))}^{2} + \parallel b \parallel_{L^q (\Q_r(z_{0}))}^{2} + \parallel c \parallel_{L^q (\Q_r(z_{0}))} 
		\right)^{\g}}{(r-\r)^{9 (Q+2)}} \int_{\Q_{r}^{-}(z_{0})} u^{p},
	\end{equation}
	where $\g = \frac{2 \a^{2} \b}{\a-1}$, with $\a$ and $\b$ defined in \eqref{esponenti}, provided that the integral is convergent.
\end{proposition}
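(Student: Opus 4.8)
The plan is to reproduce the proof of Theorem \ref{iteration} line by line, replacing throughout the symmetric cylinders $\Q_r(z_0)$ by the one--sided cylinders $\Q^{-}_r(z_0)$; as in that proof it suffices to treat $z_0=0$ (so the terminal time slice is $\{t=0\}$) and $0<\r<r\le 1$. The whole machinery rests on the Sobolev inequality of Theorem \ref{sobolev}, the Caccioppoli inequality of Proposition \ref{pcaccB1}, the covering Lemma \ref{cylinders}, and the representation \eqref{representation}, and each of these admits a backward--in--time counterpart. The structural fact behind everything is that, by \eqref{eq-Gamma0-b}, $\G(z,\z)=0$ whenever the time component $\tau$ of $\z$ is not strictly less than the time component $t$ of $z$; hence for $z\in\Q^{-}_\r(0)$ (so $t<0$) every $\G$--potential $\G(f)(z)$ depends only on $f$ restricted to $\{\tau<t\}\subseteq\Q^{-}_r(0)$, so that all the potentials appearing in \eqref{representation} are effectively built from data supported in $\Q^{-}_r(0)$.

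First I would note the backward covering lemma: with the same $\overline{c}\in\,]0,1[$ of Lemma \ref{cylinders}, $z\circ\Q^{-}_{\overline{c}\,r(r-\r)}\subseteq\Q^{-}_r(0)$ for $z\in\Q^{-}_\r(0)$; this is immediate since the time component of $z\circ\d_s(\z)$ is $t_z+s^2\z_t<t_z<0$ for $\z\in\Q_1$ with $\z_t<0$, whence $z\circ\Q^{-}_s\subseteq(z\circ\Q_s)\cap\{t<0\}\subseteq\Q_r(0)\cap\{t<0\}=\Q^{-}_r(0)$. Next I would prove the one--sided Sobolev and Caccioppoli inequalities, i.e.\ Theorem \ref{sobolev} and Proposition \ref{pcaccB1} with $\Q_\r,\Q_r$ replaced by $\Q^{-}_\r,\Q^{-}_r$, for a non--negative weak sub--solution $u$ (after the usual reduction to $u\ge u_0>0$). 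Here the cut--off $\psi$ is taken to equal $1$ on $\Q^{-}_\r(0)$ and to vanish only on the lateral and bottom parts of $\partial\Q^{-}_r(0)$, \emph{not} necessarily on the top face $\Q_r(0)\cap\{t=0\}$. The only term affected is the transport term $D$ of \eqref{estimate}: since $\trace(B)=0$, the divergence theorem on $\Q^{-}_r(0)$ yields
\begin{equation*}
	\int_{\Q^{-}_r(0)}Y(\psi^2v^2)\,=\,-\int_{\Q_r(0)\cap\{t=0\}}\psi^2v^2\,\le\,0 ,
\end{equation*}
because the outward $Y$--flux through the top face is $-1$; so this boundary contribution has the favourable sign and is simply discarded, exactly as in the one--sided estimate of Moser (\cite{M1}, Theorem 3; \cite{M2}, Lemma 1, inequality $(6^{-})$). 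For the Sobolev inequality one applies the representation \eqref{representation} with such a $\psi$: since $v\psi$ need not be compactly supported in $\Q_r(0)$, the identity is first established for $v\psi\eta_\d$, where $\eta_\d(t)$ cuts $\psi$ off near $t=0$, and one lets $\d\to0$; for a fixed $z\in\Q^{-}_\r(0)$ only $\z$ with $\tau<t_z<0$ enter, and on that set $\eta_\d\equiv1$, so the limiting formula is \eqref{representation} with the integral over $\Q^{-}_r(0)$. Lemma \ref{lemma2.5} still gives $I_3\le0$ by the same approximation, and the potential estimates of Theorem \ref{corollary}, applied on $\Q^{-}_r(0)\subseteq\Q_r(0)$ after extension by zero, yield the one--sided Sobolev inequality with all right--hand side norms over $\Q^{-}_r(0)$.

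Finally I would run the Moser iteration exactly as in the proof of Theorem \ref{iteration}: combining the two one--sided inequalities produces \eqref{it} with $\Q^{-}$ in place of $\Q$; one iterates along $\r_n=\r+2^{-n}(r-\r)$ and $p_n=\a^n p/(2\b)$, and since $p<0$ the admissibility condition \eqref{pcond} holds for every such $p$ with no further choice of $\d$ needed, so that the product of constants converges precisely as before and gives the exponent $\g=\tfrac{2\a^2\b}{\a-1}$, with $\a,\b$ as in \eqref{esponenti}, together with the power $(r-\r)^{-9(Q+2)}$. The passage from a uniformly positive $u$ to an arbitrary non--negative sub--solution is carried out as at the end of the proofs of Proposition \ref{pcaccB1} and Theorem \ref{iteration}, by applying the estimate to $u+\tfrac1n$ and letting $n\to\infty$ by monotone convergence; here $p<0$ is precisely what makes $u^p=\lim_n(u+\tfrac1n)^p$ usable and what forces the finiteness assumption on $\int_{\Q^{-}_r(0)}u^p$.

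I expect the main obstacle to be the justification of the representation formula \eqref{representation} up to and including the terminal slice $\{t=0\}$: unlike for symmetric cylinders, no admissible cut--off can be supported in $\Q^{-}_r(0)$ while still equalling $1$ on $\Q^{-}_\r(0)$, so one must carefully show — through the time truncation $\eta_\d$ above — that the value of the $\G$--potential at $z\in\Q^{-}_\r(0)$ feels the source only on $\{\tau<t_z\}\subseteq\Q^{-}_r(0)$. This is the one--sided analogue of Moser's observation that the interior estimates propagate up to, and including, the final time of a backward cylinder.
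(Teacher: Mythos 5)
Your proposal is correct and follows essentially the route the paper intends: the proof is omitted there with a reference to Proposition 5.1 of \cite{PP}, and the argument of that proposition is precisely what you reconstruct — since $\G(z,\cdot)$ vanishes on $\{\tau\ge t\}$, every potential evaluated at $z\in\Q^{-}_{\r}(z_0)$ only sees data in $\Q^{-}_{r}(z_0)$, and the sole new boundary contribution coming from a cut-off that does not vanish on the top face is the transport flux through $\{t=t_0\}$, which has the favourable sign and is discarded. With condition \eqref{pcond} automatic for $p<0$, the iteration and the limit $u+\tfrac1n\to u$ then proceed exactly as in Theorem \ref{iteration}.
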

The proof of the above Proposition can be straightforwardly accomplished proceeding as in Proposition 5.1 in \cite{PP}, and therefore is omitted.

\section*{Acknowledgements}
This research group was partially supported by the grant of Gruppo Nazionale per l'Analisi Matematica, la Probabilità e le loro Applicazioni (GNAMPA) of the Istituto 
Nazionale di Alta Matematica (INdAM). The second author acknowledges financial support from the FAR2017
project ``The role of Asymmetry and Kolmogorov equations in financial Risk Modelling (ARM)”. The third author is partially supported by the 
``RUDN University Program 5-100''.


\end{document}